\documentclass[11pt,reqno]{amsart}

\usepackage[a4paper,margin=31mm]{geometry}
\usepackage[T1]{fontenc}
\usepackage{lmodern}
\usepackage{microtype}
\usepackage{amsmath,amssymb,mathtools}
\usepackage{enumitem}
\usepackage{xcolor}
\usepackage[numbers,sort&compress]{natbib}
\usepackage[colorlinks=true,linkcolor=blue!55!black,citecolor=blue!55!black, urlcolor=blue!55!black]{hyperref}

\numberwithin{equation}{section}
\theoremstyle{plain}
\newtheorem{theorem}{Theorem}[section]
\newtheorem{proposition}[theorem]{Proposition}
\newtheorem{lemma}[theorem]{Lemma}
\newtheorem{corollary}[theorem]{Corollary}
\theoremstyle{definition}
\newtheorem{definition}[theorem]{Definition}
\theoremstyle{remark}
\newtheorem{remark}[theorem]{Remark}

\newcommand{\R}{\mathbb R}
\newcommand{\B}{\mathcal B}
\newcommand{\Sch}{\mathcal S}
\newcommand{\supp}{\operatorname{supp}}
\newcommand{\dd}{\,\mathrm d}
\newcommand{\norm}[2][]{\lVert #2\rVert_{#1}}
\newcommand{\abs}[1]{\lvert #1\rvert}
\newcommand{\Bmix}[3]{\mathcal B^{#1}_{#2;#3}}

\title[Sharp mixed spectral Barron regularity] {Sharp Mixed Spectral Barron Regularity of Coulombic Many-Electron Wave Functions}

\author[P. Ming and H. Yu]{Pingbing Ming and Hao Yu}
\address{SKLMS, Institute of Computational Mathematics and Scientific/Engineering Computing, Academy of Mathematics and Systems Science, Chinese Academy of Sciences, Beijing 100190, China}
\address{School of Mathematical Sciences, University of Chinese Academy of Sciences, Beijing 100049, China}
\email{mpb@lsec.cc.ac.cn}
\email{yuhao@amss.ac.cn}

\subjclass[2020]{Primary 35B65, 35J10; Secondary 35Q40, 46E35, 81V55}
\keywords{Coulomb Hamiltonian, electronic wave function, spectral Barron space, mixed regularity, Pauli principle, antisymmetry}

\begin{document}

\begin{abstract}
  We establish sharp mixed spectral Barron regularity for eigenfunctions of molecular Coulomb Hamiltonians. The mixed norm is a Fourier $L^1$ norm with one isotropic weight and coordinate-product weights, and therefore detects regularity invisible to the isotropic Barron scale. For a nonempty set $I$ of electron indices on which the wave function is antisymmetric, we derive an explicit admissible region for the isotropic order $s$ and the coordinate orders $\alpha,\beta$. This region is optimal as a uniform statement over the class of clamped-nuclei Coulomb Hamiltonians. For fixed-spin components with two occupied spin blocks, it reduces to $s+\alpha+\beta<1$; in the fully spin-polarized class it reduces to $s+\alpha<1$. In particular, if $\mathcal I_\sigma$ denotes the family of occupied same-spin blocks determined by $\sigma$, then every fixed-spin spatial component $\psi_\sigma$ satisfies, for every $0\leq\alpha<1$, \[ \left(\sum_{I\in\mathcal I_\sigma}\prod_{i\in I}\langle\xi_i\rangle^\alpha\right)\widehat{\psi_\sigma}\in L^1(\mathbb{R}^{3N}). \] For a fully spin-polarized state, $\mathcal I_\sigma=\{\{1,\ldots,N\}\}$.

\end{abstract}

\maketitle

\section{Introduction}

Let $N\geq1$ electrons move in the field of $L\geq1$ clamped nuclei with positive charges $Z_1,\dots,Z_L$ at distinct positions $R_1,\dots,R_L\in\R^3$. In atomic units, the Hamiltonian~is \begin{equation} \label{eq:Hamiltonian} H=-\frac12\sum_{i=1}^N\Delta_{x_i}+V,\qquad V(x)=-\sum_{i=1}^N\sum_{\nu=1}^L\frac{Z_\nu}{\abs{x_i-R_\nu}} +\sum_{1\leq i<j\leq N}\frac1{\abs{x_i-x_j}}. \end{equation}
The operator $H$ denotes the self-adjoint Friedrichs realization of the differential expression in \eqref{eq:Hamiltonian}. The Coulomb singularities occur in three-dimensional collision variables even though the configuration space has dimension $3N$. This structure is responsible for the mixed regularity of electronic wave functions and, at the same time, for the failure of direct isotropic estimates to describe all available smoothness.

The classical mixed-Sobolev theory began with Yserentant's work and its subsequent refinements. Kreusler and Yserentant proved that general electronic eigenfunctions have fractional product regularity of every order below $3/4$ \cite[Theorem~5.1]{KreuslerYserentant2012}; their model in Section~6 shows that this threshold is sharp in that scale. Yserentant proved full first-order mixed-Sobolev regularity for an explicitly cusp-regularized eigenfunction \cite[Theorem~6.6]{Yserentant2011}. More recently, Meng used the Pauli principle to prove sharper spin-dependent mixed-Sobolev estimates \cite{Meng2023}.

Spectral Barron regularity is a different, Fourier $L^1$ notion. It controls absolute Fourier moments rather than square-integrable derivatives. Yserentant proved that Coulombic electronic eigenfunctions belong to the isotropic spectral Barron space $\B^s$ for every $s<1$ \cite[Theorem~4.7]{Yserentant2026}; the hydrogen ground state shows that $s=1$ cannot be reached in general. A general Fourier--Lebesgue theory for many-particle Schr\"odinger eigenfunctions was developed in our earlier work \cite[Section~2.2]{MingYu2025}. The present paper refines these isotropic conclusions by retaining coordinate-product weights and by using cancellation forced by the Pauli principle.

Our main contributions are as follows.
\begin{enumerate}[label=(\roman*),leftmargin=2.2em]
  \item We introduce mixed spectral Barron spaces with an isotropic weight and coordinate-product weights. We prove index-set-dependent mixed Barron regularity for Coulombic eigenfunctions and, for fixed-spin components, simultaneous regularity with respect to every occupied spin block.
  \item We prove that the resulting mixed-regularity regions are uniformly sharp over the class of clamped-nuclei Coulomb Hamiltonians. Hydrogen, helium, and lithium ground states show that none of the defining inequalities of these parameter regions can be relaxed uniformly over this class.
\end{enumerate}

The paper is organized as follows. Section~\ref{sec:setting} defines the spaces and states the main results. Section~\ref{sec:operators} proves the weighted Coulomb estimates. Section~\ref{sec:gain} derives the mixed regularity gain of the resolvent. Section~\ref{sec:eigenfunction} completes the proof by a contraction argument. Section~\ref{sec:consequences} contains the proofs of the sharpness results. Section~\ref{sec:conclusion} summarizes the results and identifies directions for further work. Appendix~\ref{app:comparison-proofs} contains the auxiliary space-comparison results and their proofs.

\section{Setting and main results}\label{sec:setting}

For $f\in\Sch(\R^d)$, we use the unitary Fourier transform \[ \widehat f(\xi)=(2\pi)^{-d/2}\int_{\R^d}e^{-ix\cdot\xi}f(x)\,\dd x \] and extend it to $\Sch'(\R^d)$ by duality. Write $\langle z\rangle=(1+\abs z^2)^{1/2}$. For $\xi=(\xi_1,\dots,\xi_N)\in(\R^3)^N$ and a nonempty set $I\subseteq\{1,\dots,N\}$, define \begin{equation} \label{eq:weight} w_{s,I}^{\alpha,\beta}(\xi) =\langle\xi\rangle^s \prod_{i\in I}\langle\xi_i\rangle^\alpha \prod_{j\notin I}\langle\xi_j\rangle^\beta. \end{equation}

\begin{definition}[Mixed spectral Barron space]\label{def:mixedBarron}
  For $s,\alpha,\beta\geq0$, define \[ \Bmix{s}{I}{\alpha,\beta}(\R^{3N}) \coloneqq\left\{f\in\Sch'(\R^{3N})\,\middle|\, w_{s,I}^{\alpha,\beta}\widehat f\in L^1(\R^{3N})\right\}, \] equipped with the norm \[ \norm[\Bmix{s}{I}{\alpha,\beta}]{f} \coloneqq\int_{\R^{3N}}w_{s,I}^{\alpha,\beta}(\xi) \abs{\widehat f(\xi)}\,\dd\xi. \]
\end{definition}

For $\alpha=\beta=0$, this is the usual spectral Barron space $\B^s$. Unlike a single isotropic power, the product weights record simultaneous growth in several electron momenta.

We say that $u$ is antisymmetric with respect to $I$ if \begin{equation*} u(\dots,x_i,\dots,x_j,\dots)=-u(\dots,x_j,\dots,x_i,\dots) \quad (i,j\in I,\ i\neq j). \end{equation*}
For $\abs{I}=1$ this condition is void.

\begin{theorem}[Index-set-dependent mixed Barron regularity]
  \label{thm:mainI} Let $\psi\in H^1(\R^{3N})\setminus\{0\}$ satisfy $H\psi=E\psi$ for some $E\in\R$, and suppose that $\psi$ is antisymmetric with respect to a nonempty set $I\subseteq\{1,\dots,N\}$. Let $s,\alpha,\beta\geq0$.
  If
  \begin{equation}\label{eq:mainconditions}
    \begin{cases}
      s+\alpha<1,&I^c=\varnothing,\\ s+\alpha+\beta<1,&\abs{I^c}=1,\\ s+\max\{\alpha+\beta,2\beta\}<1,&\abs{I^c}\geq2.
    \end{cases}
  \end{equation}
  Then $\psi\in\Bmix{s}{I}{\alpha,\beta}(\R^{3N})$.
\end{theorem}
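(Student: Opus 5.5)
The plan follows the outline announced in the introduction: weighted Coulomb estimates, a resolvent gain, then a contraction argument. Fix $\mu>0$ large and rewrite the eigenvalue equation as
\[
\psi=(-\tfrac12\Delta+\mu)^{-1}(\mu+E-V)\psi .
\]
Work in the weighted Fourier-$L^1$ space defined by the weight $w=w^{\alpha,\beta}_{s,I}$. In this space the free resolvent acts as multiplication by $(\tfrac12\abs{\xi}^2+\mu)^{-1}$. Each Coulomb term $\abs{x_i-R_\nu}^{-1}$ or $\abs{x_i-x_j}^{-1}$ acts as a convolution in the single three-dimensional collision variable, either $\xi_i$ or $\xi_i-\xi_j$, with kernel $c\abs{\eta}^{-2}$. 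The strategy is to show that for each Coulomb term $W$ the composition $(-\tfrac12\Delta+\mu)^{-1}W$ is bounded on $\Bmix{s}{I}{\alpha,\beta}$ with norm $o(1)$ as $\mu\to\infty$. This then gives a contraction.

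Because $\psi$ is not a priori known to lie in the space, I would not iterate on $\psi$ directly. Instead I would apply the estimates to a truncated or regularized version: for instance the weight $w$ multiplied by $\langle\epsilon\xi\rangle^{-M}$, which is dominated uniformly in $\epsilon$, together with the known isotropic bound $\psi\in\B^{s'}$ for every $s'<1$ from \cite[Theorem~4.7]{Yserentant2026}. The contraction then yields bounds uniform in $\epsilon$, and monotone convergence finishes.

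The core is a pointwise weight estimate. For a convolution in the variable $\eta$ acting on the pair $(i,j)$, one needs
\[
\frac{w(\xi)}{\abs{\xi}^2+\mu}\int\frac{\abs{\widehat u(\xi-\eta e)}}{\abs{\eta}^2}\dd\eta
\]
to have $L^1$ norm bounded by a small multiple of $\int w\abs{\widehat u}$. The ratio $w(\xi)/w(\xi-\eta e)$ gains factors such as $\langle\eta\rangle^{s}$, $\langle\eta\rangle^{\alpha}$ or $\langle\eta\rangle^{\beta}$, one for each coordinate weight touched by the shift. These gains must be absorbed by the $\abs{\xi}^{-2}$ of the resolvent together with the $\abs{\eta}^{-2}$ kernel in three dimensions. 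A three-dimensional integral $\int\abs{\eta}^{-2}\langle\eta\rangle^{\gamma}\,(\abs{\eta}^2+\mu)^{-1}\dd\eta$ converges exactly when $\gamma<1$, and it tends to zero as $\mu\to\infty$.

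The case count follows from this. A nucleus term shifts only $\xi_i$, so the total growth is $s$ plus $\alpha$ or $\beta$, which is always admissible. An electron--electron term with $i,j\notin I$ shifts both $\xi_i$ and $\xi_j$, which costs $s+2\beta$; this is where the condition $\abs{I^c}\ge2$ requires $2\beta$. A mixed pair with $i\in I$, $j\notin I$ costs $s+\alpha+\beta$. A pair with $i,j\in I$ would naively cost $s+2\alpha$, which is too much.

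The main obstacle is therefore the pair $i,j\in I$, and here antisymmetry must be used. I would first try a symmetrization. Write the interaction as $\abs{x_i-x_j}^{-1}\psi$, where $\psi$ vanishes on $x_i=x_j$, and note that $\widehat\psi$ is antisymmetric under $\xi_i\leftrightarrow\xi_j$. Replacing $\widehat\psi(\xi-\eta e)$ by the antisymmetrized difference, the convolution $\int\abs{\eta}^{-2}\widehat\psi(\xi_i-\eta,\xi_j+\eta,\dots)\dd\eta$ can be split along $\eta\mapsto(\xi_i-\xi_j)-\eta$, which swaps the two arguments. On each half, the weight $\langle\xi_i\rangle^\alpha\langle\xi_j\rangle^\alpha$ is compared with the weight at the point where only one of the two momenta is large. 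In effect, $\abs{x_i-x_j}^{-1}\psi=\abs{x_i-x_j}^{-1}(x_i-x_j)\cdot\Phi$ with $\Phi$ a smoother integral expression, so the singularity behaves like $\abs{\eta}^{-3}\cdot\eta$ while a product weight of only one extra $\alpha$ is charged. This should reduce the cost to $s+\alpha<1$.

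If the symmetrization trick fails to close at the level of $L^1$ kernels, I would fall back on writing the antisymmetric function as $(x_i-x_j)$ times a function, estimated via a Hardy-type Fourier bound. In either route this step will require the most care.
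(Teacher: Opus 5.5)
There are two genuine gaps. Your treatment of the nuclear terms and of the non-antisymmetric pair terms is sound and gives the right case count.

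\textbf{The contraction step.} The closing step ``this then gives a contraction'' cannot work as stated. The map $\mathcal A=\mathcal R_\mu(\mu+E-V)$, with $\mathcal R_\mu=(-\tfrac12\Delta+\mu)^{-1}$, has the nonzero fixed point $\psi$. So it is not a contraction on any normed space containing $\psi$: a bound $\|\psi\|_\epsilon\le\theta\|\psi\|_\epsilon$ with $\theta<1$ in your regularized norms would force $\psi=0$.

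Concretely, $(\mu+E)\mathcal R_\mu$ is the multiplier $(\mu+E)/(\tfrac12\abs\xi^2+\mu)$. Its norm on every weighted Fourier-$L^1$ space is $\abs{1+E/\mu}$, which tends to $1$ as $\mu\to\infty$ and is $\geq1$ if $E\ge0$. Your own integral $\int\abs\eta^{-2}\langle\eta\rangle^\gamma(\abs\eta^2+\mu)^{-1}\dd\eta$ is of order $\mu^{-(1-\gamma)/2}$, far larger than the gap $\abs E/\mu$. Enlarging $\mu$ therefore never helps.

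Your proposed rescues do not close this either.
\begin{enumerate}[label=(\alph*),leftmargin=2.2em]
\item The isotropic bound $\psi\in\B^{s'}$ cannot supply an inhomogeneous term for general $N$. The weight grows like $\langle\xi\rangle^{s+\sum_\ell a_\ell}$ along $\xi_1=\dots=\xi_N$, so this bound controls $(\mu+E)\mathcal R_\mu\psi$ only when $s+\sum_\ell a_\ell<3$.
\item The damping factor $\langle\epsilon\xi\rangle^{-M}$ is incompatible with the Coulomb convolutions. These carry input mass at $\abs{\xi_i'}\gg\epsilon^{-1}$ to outputs with $\abs{\xi_i}\le1$. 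As a result a single nuclear term is unbounded in the regularized norm once $M>2+s+a_i$. Placing $\psi$ there from $\psi\in\B^{s'}$ alone needs $M>s+\sum_\ell a_\ell-1$.
\end{enumerate}

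The missing idea is a frequency split. Set $\widehat{Q_K\psi}=\mathbf 1_{\{\abs\xi<K\}}\widehat\psi$ and $P_K=\mathrm{Id}-Q_K$.
\begin{enumerate}[label=(\alph*),leftmargin=2.2em]
\item The piece $Q_K\psi$ lies in $\Bmix{s}{I}{\alpha,\beta}$ by Cauchy--Schwarz and $\psi\in L^2$.
\item The high-frequency part satisfies $P_K\psi=P_K\mathcal A(P_K\psi)+P_K\mathcal A(Q_K\psi)$.
\item $P_K\mathcal A$ is a contraction for large $K$, because $\mathcal R_\mu V$ maps $\Bmix{s}{I}{\alpha,\beta}$ into $\Bmix{s+\delta}{I}{\alpha,\beta}$ for some $\delta>0$. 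Smallness in $\mu$ for every Coulomb term, followed by $K^2\gg\mu$, would also do.
\end{enumerate}
The paper solves this equation by a Neumann series in $H^1\cap\Bmix{s}{I}{\alpha,\beta}\cap\mathfrak A_I$. It then identifies the solution with $P_K\psi$ through the same contraction in $H^1\cap\mathfrak A_I$. No regularization and no a priori Barron bound is needed.

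\textbf{The same-block pair.} This is the heart of the theorem, and you leave it as a heuristic with a fallback. Your substitution $k\mapsto(\xi_i-\xi_j)-k$ is exactly the paper's starting point. It turns the kernel into $\tfrac12\bigl(\abs k^{-2}-\abs{k-(\xi_i-\xi_j)}^{-2}\bigr)$.

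The weight-ratio bookkeeping you use for the other terms does not close with this kernel. Consider inputs with relative momentum $\xi_i'-\xi_j'=2W$ feeding outputs near $(\xi_i,\xi_j)=0$. There the bound $w(\xi)\lesssim\langle k\rangle^{s+2\alpha}w(\xi')$ is far too lossy and leaves a factor of order $\abs W^{s+2\alpha-1}$, unbounded when $s+2\alpha>1$.

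What works is to put the entire output weight against the resolvent:
\[
w_{s+\delta,I}^{\alpha,\beta}(\xi)r_\mu(\xi)\lesssim b_{ij}\langle(\xi_i,\xi_j)\rangle^{t},\qquad t=s+\delta+2\alpha-2,
\]
with $b_{ij}$ the spectator weight. For $t<-1$ the ordinary pair bound suffices. For $-1\le t<0$, write $z=(\xi_i-\xi_j)/2$; everything then reduces to
\[
J_t(W)=\int_{\R^3}(1+2\abs z^2)^{t/2}\Bigl|\frac1{\abs{z-W}^2}-\frac1{\abs{z+W}^2}\Bigr|\dd z\le C_t\abs W^{t+1}.
\]
This follows by scaling. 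It is finite because $t<0$ and the kernel difference decays like $\abs W\abs z^{-3}$.

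Next, $\abs W^{t+1}\le\langle(\xi_i',\xi_j')\rangle^s(\langle\xi_i'\rangle\langle\xi_j'\rangle)^\alpha$ whenever $t+1\le s+\alpha$, i.e.\ $\delta\le1-\alpha$. So the same-block pair costs only $t<0$ and $\alpha+\delta\le1$. When $I^c=\varnothing$, the binding condition $s+\alpha<1$ comes from the nuclear terms. No factorization through $x_i-x_j$ is needed.
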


The family of parameter regions~\eqref{eq:mainconditions}, with $N$ and the nonempty set $I$ allowed to vary, is uniformly optimal over the class of clamped-nuclei Coulomb Hamiltonians; see Corollary~\ref{cor:uniformsharpness}.

For the spin formulation, let $q\in\mathbb N$ be the number of spin states, let $\Psi(x,\sigma)$ be a fermionic electronic eigenfunction, and fix $\sigma=(\sigma_1,\dots,\sigma_N)\in\{1,\dots,q\}^N$. Define \begin{equation*} \psi_\sigma(x)=\Psi(x,\sigma),\qquad I_\ell(\sigma)=\{i\mid\sigma_i=\ell\},\qquad \mathcal I_\sigma=\{I_\ell(\sigma)\mid I_\ell(\sigma)\neq\varnothing\}. \end{equation*}
For distinct $i,j$, let $U_{ij}$ exchange $x_i$ and $x_j$. If $i,j\in I_\ell(\sigma)$, the corresponding spin transposition leaves $\sigma$ unchanged, and the Pauli principle gives
\begin{equation}\label{eq:spin-antisymmetry} U_{ij}\psi_\sigma=-\psi_\sigma \qquad(i,j\in I_\ell(\sigma),\ i\neq j). \end{equation}
Thus $\psi_\sigma$ is antisymmetric with respect to every occupied spin block $I\in\mathcal I_\sigma$; see also \cite[(1.8)--(1.9) and Remark~1.3]{Meng2023}.
\begin{theorem}[Spin-partition refinement]\label{thm:spinrefined}
  Let $\psi=\psi_\sigma$ be a fixed-spin spatial component of an electronic eigenfunction, and let $s,\alpha,\beta\geq0$. If
  \begin{equation}\label{eq:spinconditions}
    \begin{cases}
      s+\alpha<1,&\abs{\mathcal I_\sigma}=1,\\ s+\alpha+\beta<1,&\abs{\mathcal I_\sigma}=2,\\ s+\max\{\alpha+\beta,2\beta\}<1,&\abs{\mathcal I_\sigma}\geq3.
    \end{cases}
  \end{equation}
  Then
  \begin{equation}\label{eq:spinintersection} \psi\in\bigcap_{I\in\mathcal I_\sigma}\Bmix{s}{I}{\alpha,\beta}(\R^{3N}). \end{equation}
\end{theorem}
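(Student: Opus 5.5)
The plan is to reduce Theorem~\ref{thm:spinrefined} to Theorem~\ref{thm:mainI}, applied once for each occupied spin block. Fix $I\in\mathcal I_\sigma$. By \eqref{eq:spin-antisymmetry}, $\psi_\sigma$ is antisymmetric with respect to $I$. We also need $\psi_\sigma$ to be an admissible eigenfunction in the sense of Theorem~\ref{thm:mainI}. The Hamiltonian \eqref{eq:Hamiltonian} does not act on spin, so $H\psi_\sigma=E\psi_\sigma$ holds with $\psi_\sigma\in H^1(\R^{3N})$, since $\Psi\in H^1$ componentwise. If $\psi_\sigma=0$, every weighted norm vanishes and \eqref{eq:spinintersection} is trivial. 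Otherwise Theorem~\ref{thm:mainI} applies with this $I$, provided the parameters $(s,\alpha,\beta)$ satisfy \eqref{eq:mainconditions} for $I$. Membership in the intersection \eqref{eq:spinintersection} then follows by running over the finitely many $I\in\mathcal I_\sigma$.

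The substantive step is a combinatorial one: showing that \eqref{eq:spinconditions} implies \eqref{eq:mainconditions} for every $I\in\mathcal I_\sigma$. The blocks $I_\ell(\sigma)$ are nonempty, pairwise disjoint, and partition $\{1,\dots,N\}$. We proceed case by case.

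\begin{itemize}
\item If $\abs{\mathcal I_\sigma}=1$, the single block is $I=\{1,\dots,N\}$, so $I^c=\varnothing$ and the required condition $s+\alpha<1$ is exactly what is assumed.
\item If $\abs{\mathcal I_\sigma}=2$, then $I^c$ is the other block, which is nonempty. If $\abs{I^c}=1$, the required condition is $s+\alpha+\beta<1$, which is assumed. If $\abs{I^c}\ge2$, the required condition is $s+\max\{\alpha+\beta,2\beta\}<1$, and $s+\alpha+\beta<1$ alone does not give $s+2\beta<1$ when $\beta>\alpha$.
\end{itemize}

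That sub-case $\abs{\mathcal I_\sigma}=2$, $\abs{I^c}\geq2$ is the main obstacle. Here $I^c$ is itself a spin block, so $\psi_\sigma$ is antisymmetric on $I^c$ as well, and the $2\beta$ restriction in Theorem~\ref{thm:mainI} should disappear. To handle it, I would not cite Theorem~\ref{thm:mainI} as a black box. Instead I would rerun its proof, meaning the weighted Coulomb estimates of Section~\ref{sec:operators}, the resolvent gain of Section~\ref{sec:gain}, and the contraction of Section~\ref{sec:eigenfunction}, under the stronger hypothesis that $\psi$ is antisymmetric on both $I$ and $I^c$.

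The point is the following. The $2\beta$ condition comes from electron–electron terms $1/\abs{x_j-x_k}$ with $j,k\in I^c$. Such a term transfers momentum between $\xi_j$ and $\xi_k$ and must be paid for by two factors $\langle\xi_j\rangle^\beta\langle\xi_k\rangle^\beta$. When $\psi$ is antisymmetric under $U_{jk}$, the same Pauli cancellation used for pairs inside $I$ in the proof of Theorem~\ref{thm:mainI} applies to this pair. That cancellation is vanishing of $\psi$ on the collision plane $x_j=x_k$, equivalently odd symmetry of $\widehat\psi$ in $\xi_j-\xi_k$. It yields the estimate with the exponent sum $\beta+\beta$ replaced by the single-pair bound available inside a block. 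With this cancellation, every pair interaction either lies within one antisymmetric block, where the cost is an $\alpha$- or $\beta$-type single power, or crosses the two blocks, where the cost is $\alpha+\beta$. Hence $s+\alpha+\beta<1$ suffices.

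For $\abs{\mathcal I_\sigma}\ge3$, the assumed condition $s+\max\{\alpha+\beta,2\beta\}<1$ coincides with \eqref{eq:mainconditions} whenever $\abs{I^c}\ge2$, which always holds here because $I^c$ contains at least two nonempty blocks. So this case follows directly from Theorem~\ref{thm:mainI}. To summarize, the only nontrivial work is the two-block refinement, which amounts to checking that the pairwise Coulomb estimate from Section~\ref{sec:operators} used inside $I$ transfers verbatim to pairs inside an antisymmetric $I^c$.
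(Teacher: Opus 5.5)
Your proposal is correct and is essentially the paper's argument. The paper proves a spin-partition resolvent gain (Theorem~\ref{thm:spingain}) in which every same-spin pair, including pairs inside $I^c$, is handled by the antisymmetric estimate of Proposition~\ref{prop:anti}, at cost $s+c<1$ with $c\in\{\alpha,\beta\}$, and every cross-block pair by Proposition~\ref{prop:ordinary}. It then reruns the contraction on $H^1\cap\Bmix{s}{I}{\alpha,\beta}\cap\mathfrak A_\sigma$, which is exactly the rerun you describe.

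The only difference is organizational: the paper treats all $I\in\mathcal I_\sigma$ uniformly, whereas you cite Theorem~\ref{thm:mainI} in the cases where the two sets of conditions coincide. Two small points on the rerun. The cancellation actually used is the odd symmetry of $\widehat\psi$ under $\xi_j\leftrightarrow\xi_k$, not mere vanishing on $x_j=x_k$. The density step needs the block antisymmetrizer $\mathcal P_\sigma$, which preserves the weight because each spin block lies entirely in $I$ or in $I^c$.
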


For two spin states, Lemma~\ref{lem:minimal-two-spin-space} shows that, at fixed total order $s+\alpha+\beta<1$, the conclusion of Theorem~\ref{thm:spinrefined} is strongest when $s=\beta=0$. This choice yields
\begin{corollary}[Electronic wave functions]\label{cor:product}
  Let $\psi=\psi_\sigma$ be a fixed-spin spatial component of an electronic eigenfunction. For every $0\leq\alpha<1$,
  \begin{equation} \label{eq:productmoment} \int_{\R^{3N}}\left(\sum_{I\in\mathcal I_\sigma}\prod_{i\in I}\langle\xi_i\rangle^\alpha\right)\abs{\widehat\psi(\xi)}\,\dd\xi<\infty. \end{equation}
  If the state is fully spin-polarized, the sum in \eqref{eq:productmoment} consists of the single product over all electron coordinates.
\end{corollary}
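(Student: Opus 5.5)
The corollary should follow from Theorem~\ref{thm:spinrefined} with $s=\beta=0$ and the given $\alpha\in[0,1)$. I first check that this choice is admissible in every case of \eqref{eq:spinconditions}:
\begin{itemize}[leftmargin=2.2em]
  \item if $\abs{\mathcal I_\sigma}=1$, the condition reads $s+\alpha=\alpha<1$;
  \item if $\abs{\mathcal I_\sigma}=2$, it reads $s+\alpha+\beta=\alpha<1$;
  \item if $\abs{\mathcal I_\sigma}\geq3$, it reads $s+\max\{\alpha+\beta,2\beta\}=\max\{\alpha,0\}=\alpha<1$.
\end{itemize}
So the hypothesis holds for every $0\leq\alpha<1$, whatever the number of occupied spin blocks. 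Note that $\mathcal I_\sigma$ is nonempty when $N\geq1$, since each index $i$ lies in $I_{\sigma_i}(\sigma)$. Hence \eqref{eq:spinintersection} gives $\psi\in\Bmix{0}{I}{\alpha,0}(\R^{3N})$ for every $I\in\mathcal I_\sigma$.

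Next I unpack the weight. By \eqref{eq:weight} with $s=\beta=0$, $w_{0,I}^{\alpha,0}(\xi)=\prod_{i\in I}\langle\xi_i\rangle^\alpha$, because the factors $\langle\xi\rangle^0$ and $\langle\xi_j\rangle^0$ equal $1$. Membership in $\Bmix{0}{I}{\alpha,0}$ therefore says exactly that $\int_{\R^{3N}}\prod_{i\in I}\langle\xi_i\rangle^\alpha\abs{\widehat\psi(\xi)}\dd\xi<\infty$.

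To obtain \eqref{eq:productmoment}, I sum these finitely many finite integrals over $I\in\mathcal I_\sigma$; there are at most $q$ occupied blocks, so this is a finite sum. All integrands are nonnegative, so linearity of the integral (equivalently, Tonelli for a finite sum) justifies moving the sum inside the integral. For the fully spin-polarized state, all $\sigma_i$ coincide, so $\mathcal I_\sigma=\{\{1,\dots,N\}\}$ and the sum collapses to the single product over all electron coordinates.

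I do not expect any real obstacle. The only step to check is that the choice $s=\beta=0$ satisfies the third case of \eqref{eq:spinconditions}, where the term $2\beta$ vanishes. The remark about optimality of $s=\beta=0$ (Lemma~\ref{lem:minimal-two-spin-space}) motivates this choice but is not needed in the proof.
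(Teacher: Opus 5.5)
Your proposal is correct and follows the paper's own route: the paper obtains the corollary by specializing Theorem~\ref{thm:spinrefined} to $s=\beta=0$, which satisfies every case of \eqref{eq:spinconditions} when $0\leq\alpha<1$. Summing the finite norms of the resulting intersection space over the occupied blocks gives exactly the integral in \eqref{eq:productmoment}.
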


The above corollary expresses the Pauli gain as simultaneous product-weighted Fourier integrability over each same-spin block. In a fully polarized state, every electron pair benefits from the antisymmetric difference-kernel estimate.

For $s,\alpha,\beta\geq0$, define
\begin{equation}\label{eq:spin-intersection-space} \mathcal X_\sigma^{s,\alpha,\beta}=\bigcap_{I\in\mathcal I_\sigma}\Bmix{s}{I}{\alpha,\beta}(\R^{3N}), \qquad \norm[\mathcal X_\sigma^{s,\alpha,\beta}]{f}=\sum_{I\in\mathcal I_\sigma}\norm[\Bmix{s}{I}{\alpha,\beta}]{f}. \end{equation}
The following lemma gives the sharp comparison with isotropic spectral Barron spaces. Its proof is deferred to Appendix~\ref{app:comparison-proofs}.

\begin{lemma}[Sharp comparison with isotropic spectral Barron spaces]
  \label{lem:sharp-isotropic-comparison}
  Let $n_\sigma=\max_{I\in\mathcal I_\sigma}\abs I$ and $0\leq\alpha<1$. Then, for every $t\geq0$,
  \begin{equation}\label{eq:sharp-isotropic-comparison}
    \begin{aligned}
      \B^t(\R^{3N})\hookrightarrow\mathcal X_\sigma^{0,\alpha,0}
      &\Longleftrightarrow t\geq\alpha n_\sigma,\\
      \mathcal X_\sigma^{0,\alpha,0}\hookrightarrow\B^t(\R^{3N})
      &\Longleftrightarrow t\leq\alpha.
    \end{aligned}
  \end{equation}
  In particular, \[ \B^{\alpha n_\sigma}(\R^{3N})\hookrightarrow \mathcal X_\sigma^{0,\alpha,0}\hookrightarrow \B^\alpha(\R^{3N}). \]
  If $\alpha>0$ and $n_\sigma>1$, both inclusions are strict. If $\alpha=0$ or $n_\sigma=1$, then $\mathcal X_\sigma^{0,\alpha,0}=\B^\alpha(\R^{3N})$ with equivalent norms.
\end{lemma}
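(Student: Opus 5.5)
The plan is to reduce every embedding statement to a pointwise comparison of Fourier weights. For Fourier-$L^1$ spaces with continuous positive weights $w_1,w_2$, the embedding $\{w_1\widehat f\in L^1\}\hookrightarrow\{w_2\widehat f\in L^1\}$ holds if and only if $w_2\leq Cw_1$ on $\R^{3N}$. Sufficiency is immediate. For necessity, suppose $w_2/w_1$ is unbounded. We pick points $\xi^{(k)}$ with $w_2(\xi^{(k)})\geq 4^k w_1(\xi^{(k)})$ and Schwartz bumps $\widehat f_k\geq0$ concentrated in small balls around $\xi^{(k)}$ on which both weights vary by at most a factor $2$. We normalize so that $\int w_1\widehat f_k=1$, which forces $\int w_2\widehat f_k\gtrsim4^k$. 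The series $f=\sum_k2^{-k}f_k$ then lies in the first space but not in the second. Alternatively, one can apply the closed graph theorem to a bounded-embedding assumption and test on the $f_k$. Here the relevant weights are $W_\alpha(\xi)=\sum_{I\in\mathcal I_\sigma}\prod_{i\in I}\langle\xi_i\rangle^\alpha$, since $\mathcal X_\sigma^{0,\alpha,0}$ has norm $\int W_\alpha\abs{\widehat f}$, and $\langle\xi\rangle^t$ for $\B^t$.

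For the first equivalence, we use $\langle\xi_i\rangle\leq\langle\xi\rangle$. This gives $\prod_{i\in I}\langle\xi_i\rangle^\alpha\leq\langle\xi\rangle^{\alpha\abs I}\leq\langle\xi\rangle^{\alpha n_\sigma}\leq\langle\xi\rangle^t$ whenever $t\geq\alpha n_\sigma$, and hence $W_\alpha\leq\abs{\mathcal I_\sigma}\langle\xi\rangle^t$. For the converse, we take a block $I_*$ with $\abs{I_*}=n_\sigma$ and set $\xi_i=\lambda e$ for $i\in I_*$ and $\xi_j=0$ otherwise. Then $W_\alpha\geq\langle\lambda\rangle^{\alpha n_\sigma}$, while $\langle\xi\rangle^t\sim\lambda^t$. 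If $t<\alpha n_\sigma$ the ratio is unbounded as $\lambda\to\infty$, so the embedding fails.

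For the second equivalence, we use that the blocks in $\mathcal I_\sigma$ partition $\{1,\dots,N\}$. Since $\alpha<1$, each product satisfies $\prod_{i\in I}\langle\xi_i\rangle^\alpha\geq\max_{i\in I}\langle\xi_i\rangle^\alpha$. Summing over the blocks gives $W_\alpha\geq\max_i\langle\xi_i\rangle^\alpha\geq N^{-\alpha/2}\langle\xi\rangle^\alpha$, because $\langle\xi\rangle^2\leq N\max_i\langle\xi_i\rangle^2$. This yields the embedding for all $t\leq\alpha$. For the converse, we test along $\xi=(\lambda e,0,\dots,0)$. There $W_\alpha=\langle\lambda\rangle^\alpha+(\abs{\mathcal I_\sigma}-1)$, while $\langle\xi\rangle^t=\langle\lambda\rangle^t$. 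The ratio is unbounded if $t>\alpha$, so the embedding fails.

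The remaining claims follow from the equivalences. When $\alpha>0$ and $n_\sigma>1$, both inclusions are strict. If $\B^{\alpha n_\sigma}=\mathcal X_\sigma^{0,\alpha,0}$ held as sets, then the closed graph theorem (all three spaces are Banach, being weighted $L^1$ spaces on the Fourier side) would give the reverse bounded embedding $\mathcal X_\sigma^{0,\alpha,0}\hookrightarrow\B^{\alpha n_\sigma}$. That would force $\alpha n_\sigma\leq\alpha$, a contradiction; the other inclusion is handled symmetrically. When $\alpha=0$, we have $W_0=\abs{\mathcal I_\sigma}$, so the two spaces coincide with $\B^0$. When $n_\sigma=1$, every block is a singleton and $W_\alpha=\sum_i\langle\xi_i\rangle^\alpha\asymp\langle\xi\rangle^\alpha$ by the bounds above, which gives equivalent norms.

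The main obstacle is making the "only if" direction rigorous: an unbounded weight ratio must really produce a function in one space and not the other. This needs the local comparability of the weights on small balls, which holds because $\langle\cdot\rangle^\gamma$ is slowly varying, together with a closed graph argument to pass from set inclusion to bounded embedding.
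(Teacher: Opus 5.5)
Your proposal is correct and follows essentially the same route as the paper: you reduce each embedding to a pointwise comparison of the weights $W_\alpha$ and $\langle\xi\rangle^t$, test necessity along the same two rays, and obtain strictness from the open mapping/closed graph theorem. The differences are cosmetic. You bound $W_\alpha\geq N^{-\alpha/2}\langle\xi\rangle^\alpha$ via $\max_i\langle\xi_i\rangle^\alpha$, whereas the paper gets constant $1$ from subadditivity of $r\mapsto r^{\alpha/2}$; your appeal to $\alpha<1$ at that step is unnecessary, since every factor is $\geq1$. You also handle mere set inclusion, whereas the paper assumes a bounded embedding and recovers $V(\eta)\leq CU(\eta)$ directly by an approximate identity.
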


By Corollary~\ref{cor:product} and the second embedding in \eqref{eq:sharp-isotropic-comparison}, $\psi\in\B^\alpha(\R^{3N})$ for every $0\leq\alpha<1$, recovering the sharp isotropic Barron regularity established in \cite[Theorem~4.7]{Yserentant2026} and \cite[Corollary~2.7]{MingYu2025} without a dimension-dependent constant. If $0<\alpha<1$ and an occupied spin block contains at least two electrons, Lemma~\ref{lem:sharp-isotropic-comparison} shows that the mixed product-moment conclusion is strictly stronger than ordinary isotropic Barron regularity.

\subsection{Mixed Fourier--Lebesgue interpolation}\label{subsec:FLinterpolation}

We combine the Fourier $L^1$ estimates with Meng's mixed $L^2$ regularity to obtain an interpolated Fourier--Lebesgue scale. Meng's fixed-spin result gives mixed-Sobolev regularity under $\alpha<5/4$, $\beta<3/4$, and $\alpha+\beta<3/2$ \cite[Corollary~2.4]{Meng2023}. The two estimates encode different integrability and decay properties. Since both hold for the same eigenfunction, interpolation of their weighted Fourier norms yields the following intermediate regularity for every $1<p<2$.

\begin{corollary}[Mixed Fourier--Lebesgue interpolation]\label{cor:FLinterpolation}
  Let $\psi=\psi_\sigma$ be a fixed-spin spatial component of an electronic eigenfunction and fix $I\in\mathcal I_\sigma$. Let $s_0,\alpha_0,\beta_0\geq0$ satisfy the applicable condition in \eqref{eq:spinconditions}, with $s,\alpha,\beta$ replaced by $s_0,\alpha_0,\beta_0$. Let $0\leq\alpha_2<5/4$ and $0\leq\beta_2<3/4$ satisfy $\alpha_2+\beta_2<3/2$. For $1<p<2$, set $\theta=2(1-1/p)$ and \[ s_\theta=(1-\theta)s_0+\theta,\qquad \alpha_\theta=(1-\theta)\alpha_0+\theta\alpha_2,\qquad \beta_\theta=(1-\theta)\beta_0+\theta\beta_2. \]
  Then \[ w_{s_\theta,I}^{\alpha_\theta,\beta_\theta}\widehat\psi\in L^p(\R^{3N}). \]
\end{corollary}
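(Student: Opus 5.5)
The plan is to interpolate pointwise in Fourier space, via Hölder's inequality, between two weighted bounds on $\widehat\psi$: an $L^1$ bound and an $L^2$ bound. The $L^1$ endpoint is supplied by Theorem~\ref{thm:spinrefined}. Since $(s_0,\alpha_0,\beta_0)$ satisfies the applicable condition in \eqref{eq:spinconditions}, we have $w_{s_0,I}^{\alpha_0,\beta_0}\widehat\psi\in L^1(\R^{3N})$ for the fixed block $I\in\mathcal I_\sigma$.

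The $L^2$ endpoint comes from Meng's fixed-spin mixed regularity \cite[Corollary~2.4]{Meng2023}. In its hypotheses $\alpha_2<5/4$, $\beta_2<3/4$, $\alpha_2+\beta_2<3/2$, it gives
\[
\Bigl(\prod_{i\in I}\langle\xi_i\rangle^{\alpha_2}\prod_{j\notin I}\langle\xi_j\rangle^{\beta_2}\Bigr)\widehat\psi\in L^2 .
\]
Two points need checking. First, Meng's weights are attached to the same-spin block $I$, so the block structure must match. Second, we need the extra isotropic factor $\langle\xi\rangle^{1}$. I would obtain it from $\psi\in H^1$ together with the eigenvalue equation. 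Concretely, Meng's estimates are stated for the $H^1$-type mixed norm, in which a full gradient is combined with the product weights, so that
\[
w_{1,I}^{\alpha_2,\beta_2}\widehat\psi\in L^2 .
\]
This is the main point to verify. If Meng's norm carries only the product weights, I would instead recover the isotropic derivative by writing $-\frac12\Delta\psi=(E-V)\psi$ and using Hardy-type bounds on $V\psi$ in the mixed $L^2$ scale. That would be the main technical obstacle, and I would try it only if the direct citation fails.

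The interpolation itself is elementary. With $\theta=2(1-1/p)$ we have $1/p=(1-\theta)/1+\theta/2$. Since the weights are products of powers, they satisfy
\[
w_{s_\theta,I}^{\alpha_\theta,\beta_\theta}=\bigl(w_{s_0,I}^{\alpha_0,\beta_0}\bigr)^{1-\theta}\bigl(w_{1,I}^{\alpha_2,\beta_2}\bigr)^{\theta}
\]
exactly. Hence
\[
\bigl|w_{s_\theta,I}^{\alpha_\theta,\beta_\theta}\widehat\psi\bigr|=\bigl|w_{s_0,I}^{\alpha_0,\beta_0}\widehat\psi\bigr|^{1-\theta}\bigl|w_{1,I}^{\alpha_2,\beta_2}\widehat\psi\bigr|^{\theta}.
\]
Hölder's inequality with exponents $1/((1-\theta)p)$ and $2/(\theta p)$ then gives
\[
\norm[L^p]{w_{s_\theta,I}^{\alpha_\theta,\beta_\theta}\widehat\psi}\le \norm[L^1]{w_{s_0,I}^{\alpha_0,\beta_0}\widehat\psi}^{1-\theta}\norm[L^2]{w_{1,I}^{\alpha_2,\beta_2}\widehat\psi}^{\theta}<\infty .
\]
The exponents are admissible because $(1-\theta)p\cdot\frac1{(1-\theta)p}+\theta p\cdot\frac{1}{2}\cdot\frac{2}{\theta p}\cdot\frac12$ reduces to the identity $(1-\theta)p+\theta p/2=1$. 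This completes the proof, modulo confirming the form of the $L^2$ endpoint.
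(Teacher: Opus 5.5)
Your proposal is correct and follows the paper's proof: the $L^1$ endpoint comes from Theorem~\ref{thm:spinrefined}, and the $L^2$ endpoint $w_{1,I}^{\alpha_2,\beta_2}\widehat\psi\in L^2$ is cited directly from Meng's mixed-Sobolev estimate. The paper reads Meng's norm as already containing the isotropic $H^1$ factor, so your eigenvalue-equation fallback is not needed; it only adds the harmless rescaling $\xi=2\pi\zeta$ between Fourier conventions before applying the exact weight factorization and H\"older with the conjugate exponents $1/(2-p)=1/((1-\theta)p)$ and $1/(p-1)=2/(\theta p)$. Your garbled admissibility sentence should simply read $(1-\theta)p+\theta p/2=p(1-\theta/2)=1$.
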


The proof is deferred to the end of Section~\ref{sec:eigenfunction}.

\subsection{Sharpness of the main results}\label{subsec:main-sharpness}

Hydrogen, helium, and lithium ground states prove uniform sharpness in the clamped-nuclei Coulomb class. The proofs are given in Section~\ref{sec:consequences}. Let $\Phi_{N,Z}$ denote the positive scalar ground state of the $N$-electron atom with nuclear charge $Z$. Such a ground state exists for $N<Z+1$ by Zhislin's binding theorem \cite{Zhislin1960}; see also \cite[Corollary~11.10]{Simon2019}.

\begin{proposition}[Atomic endpoint tests]\label{prop:atomicsharpness}
  Let $s,\alpha,\beta\geq0$.
  \begin{enumerate}[label=\textup{(\roman*)}]
    \item For $N=1$ and $I=\{1\}$, the hydrogen ground state $\psi_{\rm H}(x)=e^{-\abs x}$ satisfies \[ \psi_{\rm H}\in\Bmix{s}{I}{\alpha,\beta}(\R^3) \quad\Longleftrightarrow\quad s+\alpha<1. \]
    \item For the helium scalar ground state and $I=\{1\}$, \[ \Phi_{2,2}\in\Bmix{s}{I}{\alpha,\beta}(\R^6) \quad\Longleftrightarrow\quad s+\alpha+\beta<1. \]
    \item For the lithium scalar ground state and $I=\{1\}$, \[ \Phi_{3,3}\in\Bmix{s}{I}{\alpha,\beta}(\R^9) \quad\Longleftrightarrow\quad s+\max\{\alpha+\beta,2\beta\}<1. \]
  \end{enumerate}
\end{proposition}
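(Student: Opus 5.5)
Each of the three items splits into a sufficiency part and a necessity part. Sufficiency comes from Theorem~\ref{thm:mainI} with $I=\{1\}$; antisymmetry with respect to a singleton is void. For hydrogen $I^c=\varnothing$, for helium $\abs{I^c}=1$, and for lithium $\abs{I^c}=2$. The three cases of \eqref{eq:mainconditions} therefore give exactly the claimed ``if'' directions. The work lies in the converse: for each ground state I need a lower bound on $\abs{\widehat\Phi}$ on a region of frequency space large enough to make the weighted integral diverge at the boundary of the region.

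For hydrogen this is explicit, since $\widehat{\psi_{\rm H}}(\xi)=c\,\langle\xi\rangle^{-4}$ with $c>0$. With $N=1$ the weight is $\langle\xi\rangle^{s+\alpha}$, so integrability in $\R^3$ holds iff $s+\alpha-4<-3$, i.e.\ $s+\alpha<1$.

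For helium and lithium I would extract the leading large-frequency asymptotics of $\widehat\Phi$ coming from the electron--nucleus cusp. The plan is to use the eigenvalue equation in Fourier form,
\[
\Bigl(\tfrac12\abs\xi^2-E\Bigr)\widehat\Phi=-\widehat{V\Phi}.
\]
Isolate the nuclear term $-Z/\abs{x_1}$ acting on $\Phi$. Its Fourier transform is a convolution in $\xi_1$ with $c\abs{\xi_1}^{-2}$. Fix a region where $\abs{\xi_1}$ is large and $\xi_2,\xi_3$ are bounded. There this term behaves like $c\,Z\abs{\xi_1}^{-2}\,\widehat{\Phi(0,\cdot)}(\xi_2,\xi_3)$ to leading order, because $\Phi(0,x_2,x_3)>0$ by positivity of the ground state, so its partial Fourier transform at $\xi'=0$ is positive. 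Hence $\abs{\widehat\Phi(\xi)}\gtrsim\abs{\xi_1}^{-4}$ on a set $\{\abs{\xi_1}\ge R,\ \abs{\xi'}\le\delta\}$. I would justify this by showing that the remaining pieces of $\widehat{V\Phi}$ (other nuclear terms after permutation, and electron--electron terms) are $o(\abs{\xi_1}^{-2})$ there. For this I would use the already-established Barron regularity $\Phi\in\B^t$ for $t<1$ and the weighted estimates of Section~\ref{sec:operators}, as a lower-order bound.

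Integrating the weight $\langle\xi_1\rangle^{s+\alpha}$ (times bounded factors) against $\abs{\xi_1}^{-4}$ forces $s+\alpha<1$. This is not yet the sharp condition. For helium I need the direction in which electron~2 carries the large momentum together with the isotropic weight. The natural test region is instead $\abs{\xi_2}\ge R$, $\abs{\xi_1}\le\delta$: by symmetry of the scalar ground state the same cusp asymptotics hold in electron~2, which gives $s+\beta<1$. The combined condition $s+\alpha+\beta<1$ must come from simultaneous large $\xi_1,\xi_2$. I would therefore use the electron--electron cusp. The term $1/\abs{x_1-x_2}$ acts as a convolution in the relative momentum, giving $\abs{\widehat\Phi}\gtrsim\abs{\xi_1}^{-4}\cdot\langle\xi_1+\xi_2\rangle^{-M}$ near the anti-diagonal $\xi_2\approx-\xi_1$, with a positive coefficient $\Phi(x_1,x_1,\dots)$ restricted to the coalescence set. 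On the tube $\abs{\xi_1+\xi_2}\le1$ the weight is $\asymp\abs{\xi_1}^{s+\alpha+\beta}$ and the tube has $\xi_1$-volume element $\dd\xi_1$. Divergence then occurs exactly when $s+\alpha+\beta\ge1$.

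For lithium, the $2\beta$ condition comes analogously from the pair $(2,3)$ with $\xi_3\approx-\xi_2$ large and $\xi_1$ bounded, where the weight is $\abs{\xi_2}^{s+2\beta}$. The $\alpha+\beta$ condition comes from the pair $(1,2)$.

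The main obstacle is rigorously establishing these two-body cusp lower bounds near the anti-diagonal. Concretely, one must show that the electron--electron convolution term dominates all other contributions to $\widehat{V\Phi}$ in that tube, with a nonvanishing coefficient given by the Fourier transform of $\Phi$ restricted to the coalescence plane. I would attempt this by dividing by $\frac12\abs\xi^2-E$, inserting the convolution formula, and bounding the error terms in $L^1$ of the tube via the Section~\ref{sec:operators} estimates. Positivity of $\Phi$ ensures the coefficient does not vanish; if needed, I would instead argue by contradiction, showing that membership in the space would force that coefficient to vanish.
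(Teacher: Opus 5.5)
Your sufficiency argument (Theorem~\ref{thm:mainI} with the singleton $I=\{1\}$) and the hydrogen computation are the same as in the paper. For the necessity in (ii)--(iii), however, you take a genuinely different route.

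The paper argues locally. Near an isolated coalescence point it uses the analytic structure theorem \cite{FournaisEtAl2009} to write $\Phi_{N,Z}=u_0+\abs r u_1$, and it derives the cusp condition $u_1(0,y)=\frac12u_0(0,y)>0$ from the equation. Lemma~\ref{lem:cuspobstruction} then gives the pointwise asymptotics $\widehat{\chi u}(\rho,\eta)=-2\sqrt{2/\pi}\,\abs\rho^{-4}\widehat{b_0}(\eta)+O(\abs\rho^{-5})$ via $\Delta_r^2\abs r=-8\pi\delta_0$. The divergence is transferred back to $u$ through the localization bound \eqref{eq:localizationbound}.

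You instead read the cusp off the Fourier form of the equation. With $\rho=(\xi_i-\xi_j)/2$ and $\eta$ the tangential frequencies, the pair convolution shifts only $\rho$, so it produces $\abs\rho^{-2}G(\eta)$ with $G(\eta)=\int\widehat\Phi(\rho',\eta)\,\dd\rho'$. Up to a constant, $G$ is the partial Fourier transform of the trace $\Phi|_{r=0}>0$, so it is not a.e.\ zero.

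This can be completed, and it is more elementary. It needs no structure theorem and no explicit cusp condition, only positivity of $\Phi$ and $\int\log\langle\xi\rangle\abs{\widehat\Phi}\,\dd\xi<\infty$. The paper's lemma, by contrast, does not use the eigenvalue equation and gives pointwise asymptotics for any function with a nonvanishing analytic cusp. Your nuclear-cusp paragraph is superseded and can be dropped.

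Two parts of your sketch should be changed, because as written they do not close.

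First, the pointwise claims are not what this method yields, and they are not needed. These are $\abs{\widehat\Phi}\gtrsim\abs{\xi_1}^{-4}\langle\xi_1+\xi_2\rangle^{-M}$ on the whole tube, and the other terms being $o(\abs{\xi_1}^{-2})$ pointwise. What works is an integrated statement on $\mathcal O\times\{\abs\rho\geq R\}$, where $\mathcal O$ is a bounded set of positive measure on which $\abs G\geq c>0$. At the endpoint the weight $\abs\rho$ times $(\frac12\abs\xi^2-E)^{-1}\asymp\abs\rho^{-2}$ leaves the density $\abs\rho^{-1}$. Split $T_{ij}\widehat\Phi-\abs\rho^{-2}G$ into three parts:
\begin{itemize}
\item the part with $\abs{\rho'}\leq\abs\rho/2$, where $\abs{\abs{\rho-\rho'}^{-2}-\abs\rho^{-2}}\lesssim\abs{\rho'}\abs\rho^{-3}$;
\item the tail $\abs\rho^{-2}\int_{\abs{\rho'}>\abs\rho/2}\widehat\Phi\,\dd\rho'$, which needs the logarithmic moment;
\item the term $\int_{\abs{\rho'}>\abs\rho/2}\abs{\rho-\rho'}^{-2}\widehat\Phi\,\dd\rho'$, controlled by scaling.
\end{itemize}
Each part lies in $L^1(\abs\rho^{-1}\dd\rho\,\dd\eta)$, while $\abs\rho^{-3}\abs G$ does not.

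Second, the remaining Coulomb terms cannot be treated with the isotropic estimates of Section~\ref{sec:operators}, since at total order one these are exactly borderline ($\tau=-1$). You would have to split the weight, for instance as $\langle\xi_i\rangle^{1/2}\langle\xi_j\rangle^{1/2}$. It is simpler to note that every term other than the $(i,j)$ pair changes $\eta$. The bounded cross-section of the tube therefore confines the convolution variable $k$ to a bounded set, and each such contribution is at most $C\norm[L^1]{\widehat\Phi}$.
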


\begin{corollary}[Uniform sharpness of the index-set family]\label{cor:uniformsharpness}
  The three parameter regions in \eqref{eq:mainconditions} are uniformly sharp in $N$, the nonempty index set $I$, and the clamped-nuclei Coulomb Hamiltonian \eqref{eq:Hamiltonian}.
\end{corollary}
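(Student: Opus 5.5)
The plan is to deduce Corollary~\ref{cor:uniformsharpness} from Theorem~\ref{thm:mainI} together with the three atomic endpoint tests of Proposition~\ref{prop:atomicsharpness}. First I fix what "uniformly sharp" should mean. For each of the three cases in \eqref{eq:mainconditions} (namely $I^c=\varnothing$, $\abs{I^c}=1$, $\abs{I^c}\geq2$), the region is the largest set of $(s,\alpha,\beta)\in[0,\infty)^3$ such that every eigenfunction of every clamped-nuclei Coulomb Hamiltonian, antisymmetric on a nonempty $I$ of the given type, lies in $\Bmix{s}{I}{\alpha,\beta}$. One inclusion is exactly Theorem~\ref{thm:mainI}. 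For the converse, for each case I have to exhibit a single admissible triple $(N,I,\psi)$ whose membership set coincides with the region. Once that is done, no parameter point outside the region can be admitted uniformly, and since the witness satisfies every defining inequality with equality at the boundary, none of the inequalities can be relaxed.

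The matching of witnesses to cases is as follows.
\begin{itemize}
\item For $I^c=\varnothing$: take $N=1$, $I=\{1\}$, $L=1$, $Z=1$, and $\psi_{\rm H}$. Antisymmetry is void, and Proposition~\ref{prop:atomicsharpness}(i) gives membership iff $s+\alpha<1$, which is exactly the region.
\item For $\abs{I^c}=1$: take helium, $N=2$, $I=\{1\}$, $I^c=\{2\}$. Antisymmetry on a singleton $I$ is void, so the scalar ground state $\Phi_{2,2}$ (which exists since $2<3$) is admissible. Part~(ii) gives iff $s+\alpha+\beta<1$.
\item For $\abs{I^c}\geq2$: take lithium, $N=3$, $I=\{1\}$, $\abs{I^c}=2$, with $\Phi_{3,3}$ existing since $3<4$. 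Part~(iii) gives precisely $s+\max\{\alpha+\beta,2\beta\}<1$.
\end{itemize}

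The remaining point is that the region in the third case is the same formula for all $\abs{I^c}\geq2$. It therefore suffices to have one witness with $\abs{I^c}=2$: a uniform statement over all $\abs{I^c}\geq2$ cannot exceed what holds for this particular instance. The same observation handles uniformity in $N$ and in $I$. The regions depend only on the case, and each case contains a witness realizing its boundary, so the family is uniformly optimal.

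The main obstacle is really Proposition~\ref{prop:atomicsharpness}, which is taken as given here. The only delicate point in the corollary itself is to be careful with quantifiers. The witnesses use $\abs I=1$ and scalar (nonantisymmetric) ground states. This is legitimate because the theorem's hypothesis imposes no condition when $\abs I=1$. I would remark that sharpness is asserted uniformly over the class, not for each fixed $I$ with $\abs I\geq2$.
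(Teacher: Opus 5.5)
Your proposal is correct and follows the paper's proof: Theorem~\ref{thm:mainI} gives sufficiency, and Proposition~\ref{prop:atomicsharpness} applied to the hydrogen, helium, and lithium ground states with $I=\{1\}$ (so that $\abs{I^c}=0,1,2$) gives necessity for each of the three regions. Your observation that a single witness with $\abs{I^c}=2$ suffices, because the third region does not depend on $\abs{I^c}$, is the reasoning the paper leaves implicit in choosing lithium.
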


The same examples prove uniform sharpness of the spin-dependent conclusions. Multiplying the helium spatial ground state by the spin singlet gives a physical fermionic state with one electron in each of the two occupied spin blocks, so the case $\abs{\mathcal I_\sigma}=2$ in \eqref{eq:spinconditions} is sharp. Hydrogen makes $s+\alpha<1$ sharp in the fully spin-polarized class when $N$ is allowed to vary. If $q\geq3$, multiplying the lithium spatial ground state by a totally antisymmetric three-spin factor and taking a fixed-spin component with three distinct occupied spin states realizes both the $\alpha+\beta$ and $2\beta$ faces.

\section{Weighted Coulomb operators}\label{sec:operators}

The Fourier transform of $\abs x^{-1}$ in three dimensions is a positive multiple of $\abs\xi^{-2}$. Nuclear terms therefore lead to a scalar Riesz potential, while an electron--electron term translates two momentum variables in opposite directions. We first derive sharp weighted $L^1$ bounds for the scalar and ordinary pair operators, and then exploit antisymmetric cancellation to improve the pair threshold. These estimates provide the convolution bounds used in Section~\ref{sec:gain} to obtain a positive resolvent gain.

\subsection{Scalar and ordinary pair estimates}

This subsection computes the exact weighted $L^1$ norms of the nuclear convolution and the ordinary electron--pair translation operator. Their endpoint failures determine the restrictions used for nuclear and non-antisymmetric pair terms in Section~\ref{sec:gain}. For $f\in L^1(\R^3)$, define
\begin{equation*} (I_2f)(x){:}=\int_{\R^3}\frac{f(y)}{\abs{x-y}^2}\,\dd y. \end{equation*}
\begin{proposition}[Scalar Coulomb convolution]\label{prop:scalar}
  For $\tau<-1$, the exact norm of $I_2\colon L^1(\R^3)\to L^1(\R^3,\langle x\rangle^\tau\dd x)$ is \begin{equation*} 2\pi^{3/2}\frac{\Gamma((-\tau-1)/2)}{\Gamma(-\tau/2)}. \end{equation*}
  The mapping is unbounded when $\tau\geq-1$.
\end{proposition}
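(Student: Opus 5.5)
The operator norm of a positive integral operator from $L^1$ to a weighted $L^1$ space equals the supremum over $y$ of the weighted $L^1$ norm of its kernel column. The plan is to compute that supremum explicitly: it is attained at $y=0$ and reduces to a one-dimensional Beta integral.

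\emph{Reduction to a kernel supremum.} Since the kernel $\abs{x-y}^{-2}$ is nonnegative, Tonelli gives, for $f\in L^1(\R^3)$,
\[
\int_{\R^3}\langle x\rangle^\tau\abs{(I_2f)(x)}\dd x\le\int_{\R^3}\abs{f(y)}\,K(y)\dd y,
\qquad K(y)\coloneqq\int_{\R^3}\frac{\langle x\rangle^\tau}{\abs{x-y}^2}\dd x .
\]
Hence $\norm{I_2}\le\sup_y K(y)$. Conversely, take $f=\phi_\varepsilon(\cdot-y_0)\ge0$, an approximate identity of unit mass. Then $\norm{I_2f}=\int K\phi_\varepsilon(\cdot-y_0)$, and by lower semicontinuity of $K$ (Fatou) this is $\ge K(y_0)-o(1)$. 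So the operator norm is exactly $\sup_y K(y)$, and in particular it is infinite as soon as $K\equiv\infty$.

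\emph{The supremum is at $y=0$.} Both $\langle x\rangle^\tau$ (for $\tau<0$) and $\abs{x}^{-2}$ are radially symmetric and decreasing. Thus $K=\langle\cdot\rangle^\tau*\abs{\cdot}^{-2}$ is a convolution of two symmetric decreasing functions, and by the Riesz rearrangement inequality (or the Hardy–Littlewood inequality applied to the translate) $K(y)\le K(0)$. This monotonicity step is the only point needing care. A direct alternative is to average over spheres: write $K(y)=\int \langle x\rangle^{\tau}\,m_{\abs y}(\abs{x})\,\dd x$, where $m_r(\rho)$ is the spherical mean of $\abs{\cdot-y}^{-2}$ over $\abs x=\rho$. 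The mean $m_r(\rho)=\frac{1}{2r\rho}\log\frac{r+\rho}{\abs{r-\rho}}$ is dominated by $\rho^{-2}$ in the sense of rearrangement. I would simply cite Riesz's inequality.

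\emph{Computing $K(0)$.} In polar coordinates,
\[
K(0)=4\pi\int_0^\infty(1+r^2)^{\tau/2}\dd r .
\]
This is finite iff $\tau<-1$. Substituting $u=r^2/(1+r^2)$ turns it into $2\pi B\bigl(\tfrac12,\tfrac{-\tau-1}2\bigr)$, which equals
\[
2\pi\,\frac{\Gamma(1/2)\Gamma((-\tau-1)/2)}{\Gamma(-\tau/2)}=2\pi^{3/2}\,\frac{\Gamma((-\tau-1)/2)}{\Gamma(-\tau/2)},
\]
giving the claimed value.

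\emph{Unboundedness for $\tau\ge-1$.} For any $y$, $K(y)\ge\int_{\abs x\ge 2\abs y+1}\langle x\rangle^\tau(2\abs x)^{-2}\dd x=\infty$, since the integrand behaves like $r^{\tau}$ at infinity against $\dd r$. Taking $f$ to be any nonnegative nonzero $L^1$ function (e.g.\ an indicator of a ball), Tonelli then yields $\norm{I_2f}_{L^1(\langle x\rangle^\tau)}=\infty$. So $I_2$ does not even map $L^1$ into the weighted space.
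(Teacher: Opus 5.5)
Your proof is correct and follows the same route as the paper. Both arguments use the Tonelli bound by the supremum of the kernel column $K(y)$, a rearrangement inequality to place the maximum at $y=0$ (the paper cites Brascamp--Lieb--Luttinger; your Hardy--Littlewood version suffices), an approximate-identity lower bound, the Beta-integral evaluation, and divergence of $\int^\infty(1+r^2)^{\tau/2}\,\dd r$ for $\tau\geq-1$. Your lower-semicontinuity-via-Fatou step for the lower bound is, if anything, a slightly cleaner justification than the paper's unproved assertion that $\Lambda_\tau$ is continuous.
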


\begin{proof}
  Set $w_\tau(x)=\langle x\rangle^\tau$ and \[ \Lambda_\tau(y)=\int_{\R^3}\frac{w_\tau(x)}{\abs{x-y}^2}\,\dd x. \]
  For $f\in L^1(\R^3)$, positivity of the kernel and Tonelli's theorem give \[ \begin{aligned} \norm[L^1(w_\tau\dd x)]{I_2f} &\leq \int_{\R^3}w_\tau(x)\int_{\R^3}\frac{\abs{f(y)}}{\abs{x-y}^2}\,\dd y\,\dd x \\ &=\int_{\R^3}\abs{f(y)}\Lambda_\tau(y)\,\dd y \leq \bigl(\operatorname*{ess\,sup}_{y\in\R^3}\Lambda_\tau(y)\bigr)\norm[L^1]{f}. \end{aligned} \]
  The functions $w_\tau$ and $\abs{\,\cdot\,}^{-2}$ are nonnegative, radial, radially nonincreasing, and vanish at infinity. The Brascamp--Lieb--Luttinger rearrangement inequality \cite[Theorem~3.4, p.~233; see also Definition~3.3, p.~232]{BrascampLiebLuttinger1974} therefore gives, for every $y\in\R^3$, \[ \Lambda_\tau(y)=\int_{\R^3}\frac{w_\tau(x)}{\abs{x-y}^2}\,\dd x \leq \int_{\R^3}\frac{w_\tau(x)}{\abs x^2}\,\dd x=\Lambda_\tau(0)<\infty, \] where finiteness follows from $\tau<-1$. The function $\Lambda_\tau$ is continuous.
  Hence, for every $\varepsilon>0$, there is a ball $B_\varepsilon$ centered at the origin on which $\Lambda_\tau(y)>\Lambda_\tau(0)-\varepsilon$. Taking $f=\abs{B_\varepsilon}^{-1}\mathbf 1_{B_\varepsilon}$ and using Tonelli once more gives \[ \norm[L^1(w_\tau\dd x)]{I_2f}=\frac1{\abs{B_\varepsilon}}\int_{B_\varepsilon} \Lambda_\tau(y)\,\dd y>\Lambda_\tau(0)-\varepsilon. \]
  Together with the upper bound, and then letting $\varepsilon\downarrow0$, this proves \[ \norm{I_2}=\operatorname*{ess\,sup}_{y\in\R^3}\Lambda_\tau(y)=\Lambda_\tau(0). \]
Using spherical coordinates, the substitution $v=r^2$, and Euler's beta integral, we obtain
\[ \Lambda_\tau(0)=\int_{\R^3}\frac{\langle x\rangle^\tau}{\abs x^2}\,\dd x =4\pi\int_0^\infty(1+r^2)^{\tau/2}\,\dd r =2\pi^{3/2}\frac{\Gamma((-\tau-1)/2)}{\Gamma(-\tau/2)}. \]
For $\tau\geq-1$, choose $R>0$ and a nonnegative $f\in L^1(\R^3)$ supported in $B_R$ with $\int_{\R^3}f(y)\,\dd y=1$. If $\abs x\geq2R$, then $\abs{x-y}\leq\abs x+R\leq3\abs x/2$ for $y\in\operatorname{supp}f$, and therefore \[ (I_2f)(x)\geq\frac{4}{9\abs x^2}. \]
Consequently, \[ \norm[L^1(w_\tau\dd x)]{I_2f}\geq\frac49\int_{\abs x\geq2R}\frac{\langle x\rangle^\tau}{\abs x^2}\,\dd x =\frac{16\pi}{9}\int_{2R}^\infty(1+r^2)^{\tau/2}\,\dd r=\infty \] whenever $\tau\geq-1$, which proves the asserted unboundedness.
\end{proof}

For integrable functions defined on $\R^3\times\R^3$, define the pair translation operator
\begin{equation*} (Tf)(x,y){:}=\int_{\R^3}\frac{f(x-k,y+k)}{\abs k^2}\,\dd k. \end{equation*}
\begin{proposition}[Ordinary pair operator]\label{prop:ordinary}
  For $t<-1$, the exact norm of \[ T\colon L^1(\R^6)\longrightarrow L^1(\R^6,\langle(x,y)\rangle^t\dd x\dd y) \] is $2^{-1/2}$ times the scalar norm in Proposition~\ref{prop:scalar}, namely \begin{equation*} C_t^{\mathrm o}=\sqrt2\,\pi^{3/2} \frac{\Gamma((-t-1)/2)}{\Gamma(-t/2)}. \end{equation*}
  The mapping is unbounded when $t\geq-1$.
\end{proposition}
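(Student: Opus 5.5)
Our plan is to copy the proof of Proposition~\ref{prop:scalar}, first reducing the six-dimensional problem to a supremum over translates. With $W_t(x,y)=\langle(x,y)\rangle^t$, Tonelli's theorem and the change of variables $(x,y)\mapsto(x+k,y-k)$ give
\[
\norm[L^1(W_t)]{Tf}\leq\int_{\R^6}\abs{f(u,v)}\,M_t(u,v)\,\dd u\dd v,
\qquad
M_t(u,v)=\int_{\R^3}\frac{W_t(u+k,v-k)}{\abs k^2}\,\dd k .
\]
Hence $\norm{T}\leq\operatorname*{ess\,sup}M_t$. For the reverse inequality we use $M_t$ continuous (once it is finite) and take normalized indicators of small balls around the maximizer, exactly as in the scalar case. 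This shows $\norm T=\sup M_t$.

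Next we evaluate $M_t$. Split $k$-space orthogonally. We have $\abs{u+k}^2+\abs{v-k}^2=2\abs{k-\tfrac{v-u}{2}}^2+\tfrac12\abs{u+v}^2$. Therefore
\[
M_t(u,v)=\int_{\R^3}\frac{\bigl(1+\tfrac12\abs{u+v}^2+2\abs{k-a}^2\bigr)^{t/2}}{\abs k^2}\,\dd k,\qquad a=\tfrac{v-u}{2}.
\]
Since $t<0$, the integrand is pointwise maximized in the constant by $u+v=0$. In the variable $k$, the function $(1+2\abs{k-a}^2)^{t/2}$ is a translate of a radial, radially nonincreasing function vanishing at infinity. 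The Brascamp--Lieb--Luttinger inequality, as used in Proposition~\ref{prop:scalar}, then gives $M_t\leq M_t(0,0)$. Substituting $k=z/\sqrt2$ yields
\[
M_t(0,0)=\int_{\R^3}\frac{(1+2\abs k^2)^{t/2}}{\abs k^2}\dd k=\frac{1}{\sqrt2}\int_{\R^3}\frac{\langle z\rangle^t}{\abs z^2}\dd z=2^{-1/2}\Lambda_t(0).
\]
By the beta integral from Proposition~\ref{prop:scalar}, this equals $C^{\mathrm o}_t$, and it is finite exactly when $t<-1$.

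For unboundedness when $t\geq-1$, take $f\geq0$ with unit mass supported in $B_R\subset\R^6$. For $\abs k\geq 2R$, the point $(x,y)=(u+k,v-k)$ ranges over a region where $(Tf)(x,y)$ is comparable to $\abs k^{-2}$. More precisely, we change variables to $(x,y)$ in the integral $\int W_t\,Tf$. Using the Tonelli identity above, we get $\norm{Tf}_{L^1(W_t)}=\int f\,M_t$. Since $M_t(u,v)=\infty$ for every $(u,v)$ when $t\geq-1$ (the $k$-integrand decays like $\abs k^{t-2}$), the norm is infinite. This is even simpler than the scalar argument.

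The only delicate point is justifying the rearrangement step for a translated, non-centered weight in the presence of the additive constant. The plan is to apply BLL to the two radial functions $\abs{k}^{-2}$ and $(c+2\abs{k}^2)^{t/2}$ with $c\geq1$ fixed, and then bound by $c=1$.
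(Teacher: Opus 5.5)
Your proposal is correct and follows essentially the paper's route: Tonelli reduces the operator norm to $\sup M_t$ (the paper's $Q_t$), and a rearrangement bound places the maximum at $(u,v)=(0,0)$. The substitution $k=z/\sqrt2$ then gives $2^{-1/2}\Lambda_t(0)=C_t^{\mathrm o}$, and the fact that $M_t\equiv\infty$ for $t\geq-1$ gives unboundedness.

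The only difference is minor. The paper rescales to the exact identity $Q_t(u,v)=2^{-1/2}\lambda^{(t+1)/2}\Lambda_t\bigl(\sqrt{2/\lambda}\,b\bigr)$, which also makes continuity of $Q_t$ immediate. You instead drop the constant $\tfrac12\abs{u+v}^2$ using $t<0$ and apply the rearrangement inequality directly to the translate; for the matching lower bound, lower semicontinuity at the origin (via Fatou) is enough.
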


\begin{proof}
  For $t\in\R$ and $(u,v)\in\R^6$, set \[ Q_t(u,v)=\int_{\R^3}\langle(u+k,v-k)\rangle^t\frac{\dd k}{\abs k^2}\in(0,\infty]. \]
  Suppose first that $t<-1$.
  Tonelli's theorem gives \[ \norm[L^1(\langle(x,y)\rangle^t\dd x\dd y)]{Tf} \leq\int_{\R^6}\abs{f(u,v)}Q_t(u,v)\,\dd u\,\dd v. \]
  Equality holds here for every nonnegative $f$. Testing with normalized characteristic functions of finite-measure subsets of \[ \{(u,v)\mid Q_t(u,v)>\operatorname*{ess\,sup}Q_t-\varepsilon\} \] and then letting $\varepsilon\downarrow0$ proves \[ \norm{T}=\operatorname*{ess\,sup}_{(u,v)\in\R^6}Q_t(u,v). \]

  Put $\lambda=1+\frac12\abs{u+v}^2$ and $b=(u-v)/2$.
  Since \[ \abs{u+k}^2+\abs{v-k}^2=2\abs{k+b}^2+\frac12\abs{u+v}^2, \] the change of variables $q=\sqrt{2/\lambda}(k+b)$ gives \[ Q_t(u,v)=\frac{\lambda^{(t+1)/2}}{\sqrt2}\,\Lambda_t\!\left(\sqrt{\frac2\lambda}\,b\right), \] where $\Lambda_t$ is the scalar convolution weight used in the proof of Proposition~\ref{prop:scalar}. That proof shows that $\Lambda_t$ is continuous and $\Lambda_t(z)\leq\Lambda_t(0)$ for every $z\in\R^3$. Since $t<-1$ and $\lambda\geq1$, \[ Q_t(u,v)\leq\frac1{\sqrt2}\Lambda_t(0)=Q_t(0,0). \]
  The displayed representation makes $Q_t$ continuous, so its pointwise maximum is also its essential supremum. Proposition~\ref{prop:scalar} now gives \[ \norm{T}=\frac1{\sqrt2}\Lambda_t(0)=\sqrt2\,\pi^{3/2}\frac{\Gamma((-t-1)/2)}{\Gamma(-t/2)}=C_t^{\mathrm o}. \]

  Suppose $t\geq-1$. For each fixed $(u,v)$, there are $R>0$ and $c>0$ such that \[ \langle(u+k,v-k)\rangle^t\geq c\abs k^t, \qquad \abs k\geq R. \]
  Thus \[ Q_t(u,v)\geq4\pi c\int_R^\infty r^t\,\dd r=\infty. \]
  Applying Tonelli's theorem to any nonzero nonnegative $f\in L^1(\R^6)$ proves that $T$ is unbounded.
\end{proof}

\subsection{The antisymmetric gain}

Compared with Proposition~\ref{prop:ordinary}, the next result extends the admissible target exponent from $t<-1$ to $t<0$, thereby gaining one full order from antisymmetry. We first derive the weighted $L^1$ estimate from the difference-kernel representation of the pair operator and then record its fiberwise form for use in Section~\ref{sec:gain}.

\begin{proposition}[Antisymmetric pair operator]\label{prop:anti}
  Let $s,a\geq0$ and $-1\leq t<0$. If \begin{equation*} s+a\geq t+1, \end{equation*} then, on functions satisfying $f(x,y)=-f(y,x)$, \begin{equation} \label{eq:antibound} \int_{\R^6}\langle(x,y)\rangle^t\abs{Tf(x,y)}\,\dd x\dd y \leq C_t^{\mathrm a} \int_{\R^6}\langle(x,y)\rangle^s (\langle x\rangle\langle y\rangle)^a \abs{f(x,y)}\,\dd x\dd y, \end{equation} where \begin{equation*} C_t^{\mathrm a} =-\frac{2^{t/2+1}\pi^2}{t+2}\cot\!\left(\frac{\pi t}{4}\right). \end{equation*}
\end{proposition}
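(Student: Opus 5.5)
The plan is to exploit antisymmetry through a change of variables in the $k$-integral. This turns the kernel $\abs k^{-2}$ into a difference kernel with faster decay, after which the argument proceeds as in Propositions~\ref{prop:scalar} and~\ref{prop:ordinary}.

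\emph{Symmetrization.} Fix $(x,y)$ and substitute $k'=x-y-k$. Then $(x-k',y+k')=(y+k,x-k)$, so $f(x,y)=-f(y,x)$ gives
\[
Tf(x,y)=-\int_{\R^3}\frac{f(x-k,y+k)}{\abs{x-y-k}^2}\,\dd k ,
\]
and averaging with the definition yields
\[
Tf(x,y)=\frac12\int_{\R^3}f(x-k,y+k)\Bigl(\abs k^{-2}-\abs{x-y-k}^{-2}\Bigr)\dd k .
\]
Setting $u=x-k$, $v=y+k$ (so $x-y-k=u-v+k$) and applying Tonelli's theorem bounds the left side of \eqref{eq:antibound} by $\int\abs{f(u,v)}\,Q^{\mathrm a}_t(u,v)\,\dd u\,\dd v$, where
\[
Q^{\mathrm a}_t(u,v)=\frac12\int_{\R^3}\langle(u+k,v-k)\rangle^t\,\Bigl|\abs k^{-2}-\abs{k+u-v}^{-2}\Bigr|\dd k .
\]
It therefore suffices to prove the pointwise bound $Q^{\mathrm a}_t(u,v)\le C_t^{\mathrm a}\langle(u,v)\rangle^{t+1}$. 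Once that holds, $s+a\ge t+1$ together with $\langle(u,v)\rangle\le\langle u\rangle\langle v\rangle$ gives
\[
\langle(u,v)\rangle^{t+1}\le\langle(u,v)\rangle^{s}(\langle u\rangle\langle v\rangle)^{a}.
\]
This is trivial if $t+1-s\le0$; otherwise write $\langle(u,v)\rangle^{t+1-s}\le(\langle u\rangle\langle v\rangle)^{t+1-s}$ and use $t+1-s\le a$.

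\emph{Reduction to one parameter.} As in the proof of Proposition~\ref{prop:ordinary}, put $b=(u-v)/2$, $\lambda=1+\frac12\abs{u+v}^2$ and $q=k+b$. Then
\[
\langle(u+k,v-k)\rangle^2=\lambda+2\abs q^2
\quad\text{and}\quad
\abs k^{-2}-\abs{k+2b}^{-2}=\abs{q-b}^{-2}-\abs{q+b}^{-2}.
\]
Scaling $q=\abs b p$ and writing $e=b/\abs b$ gives
\[
Q^{\mathrm a}_t=\frac{\abs b}{2}\int_{\R^3}\bigl(\lambda+2\abs b^2\abs p^2\bigr)^{t/2}D(p)\,\dd p,
\qquad D(p)=\Bigl|\abs{p-e}^{-2}-\abs{p+e}^{-2}\Bigr| .
\]
The kernel $D$ has integrable singularities at $\pm e$ and decays like $\abs p^{-3}$. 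For $t<0$ the weight therefore makes the integral converge, which is exactly the one-order gain over Proposition~\ref{prop:ordinary}.

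Note that $\lambda+2\abs b^2=\langle(u,v)\rangle^2$. Set $\rho=\sqrt2\abs b/\sqrt\lambda$. The ratio $Q^{\mathrm a}_t/\langle(u,v)\rangle^{t+1}$ is then a function of $\rho\in[0,\infty)$ alone, up to an explicit factor involving $\lambda^{(t+1)/2}$, which is harmless since $\lambda\ge1$ and $t+1\ge0$. A crude bound already follows: for small $\rho$, splitting at $\abs p\sim1/\rho$ gives
\[
Q^{\mathrm a}_t\lesssim\abs b\,\lambda^{t/2}\bigl(1+\log(\sqrt\lambda/\abs b)\bigr)\lesssim\lambda^{(t+1)/2},
\]
and for large $\rho$ one gets $Q^{\mathrm a}_t\lesssim\abs b^{t+1}$. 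This proves \eqref{eq:antibound} with some finite constant.

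\emph{Main obstacle: the sharp constant.} To obtain exactly $C_t^{\mathrm a}$, I would first bound the weight by a pure power. For $-1\le t<0$, an elementary inequality of the form
\[
\bigl(\lambda+2\abs b^2\abs p^2\bigr)^{t/2}\le\bigl(\lambda+2\abs b^2\bigr)^{(t+1)/2}\,\frac{\abs p^{t}}{c\,\abs b}
\]
(possibly after tuning $c$, e.g.\ via weighted AM--GM) should reduce the problem to the scale-free integral $\int_{\R^3}\abs p^tD(p)\,\dd p$. This integral I would evaluate in radial coordinates using Newton's angular average
\[
\frac1{4\pi}\int_{S^2}\abs{r\omega-e}^{-2}\dd\omega=\frac1{2r}\log\frac{r+1}{\abs{r-1}} .
\]
The absolute value complicates the angular step. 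I would handle it by splitting the sphere into the two hemispheres $\pm p\cdot e>0$, on which the sign of the difference is fixed, and computing each half via the explicit antiderivative. The remaining Mellin-type integral $\int_0^\infty r^{t+1}(\cdots)\,\dd r$ should produce the factor $\cot(\pi t/4)/(t+2)$ through the reflection formula, i.e.\ via integrals such as $\int_0^\infty r^{t+1}\log\frac{1+r}{\abs{1-r}}\,\dd r$. Matching the factor $2^{t/2+1}\pi^2$ is where I expect most of the difficulty. If the pointwise power-weight reduction turns out to be lossy, I would instead show that the supremum over $\rho$ of the normalized $Q^{\mathrm a}_t$ is approached as $\rho\to\infty$, where the same homogeneous integral appears.
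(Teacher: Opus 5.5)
Your strategy is the paper's: the substitution $k\mapsto x-y-k$ to produce the difference kernel, the centre/relative change of variables, reduction to the scale-free integral $\int_{\R^3}\abs p^{t}D(p)\,\dd p$, the hemisphere split in polar coordinates, and the cotangent partial-fraction formula. Your direct use of Tonelli is also legitimate, without the paper's density step: $f\in L^1$ makes $\int\abs{f(x-k,y+k)}\abs k^{-2}\,\dd k$ finite a.e.\ by Proposition~\ref{prop:ordinary} with any exponent below $-1$.

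\textbf{The gap.} As written, the proposal proves \eqref{eq:antibound} only with an unspecified constant, while the statement asserts the explicit value $C_t^{\mathrm a}$. You leave this as the ``main obstacle'', and the integral you suggest for it is the wrong one.

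\textbf{How to close it.} The missing steps are short.
\begin{itemize}
\item \emph{The pointwise inequality.} No AM--GM is needed: since $t<0$, just drop $\lambda$,
\[
\bigl(\lambda+2\abs b^2\abs p^2\bigr)^{t/2}\leq 2^{t/2}\abs b^{t}\abs p^{t},
\qquad\text{so}\qquad
Q^{\mathrm a}_t(u,v)\leq 2^{t/2-1}\abs b^{t+1}\int_{\R^3}\abs p^{t}D(p)\,\dd p .
\]
Then $\abs b^{t+1}\leq\langle(u,v)\rangle^{t+1}$ because $t+1\geq0$. This is your inequality with $c=2^{-t/2}$, and it is exactly the paper's bound on $J_t(W)$ with $W=b$.
\item \emph{The angular integral.} On the hemispheres $\pm\omega\cdot e>0$ the difference has a fixed sign, and one gets
\[
\int_{\mathbb S^2}D(r\omega)\,\dd\omega=\frac{4\pi}{r}\log\frac{r^2+1}{\abs{r^2-1}}=O(r^{-3}) .
\]
The integral $\int_0^\infty r^{t+1}\log\frac{1+r}{\abs{1-r}}\,\dd r$ you name diverges for every $t\geq-1$, since its integrand behaves like $2r^{t}$. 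It discards the cancellation between the two kernels.
\item \emph{The radial integral.} Set $\Theta_t=\int_0^\infty r^{t+1}\log\frac{r^2+1}{\abs{r^2-1}}\,\dd r$. Substitute $\varrho=r^2$ and fold $(1,\infty)$ onto $(0,1)$ via $\varrho\mapsto1/\varrho$. Expand $\log\frac{1+\varrho}{1-\varrho}=2\sum_{m\geq0}\varrho^{2m+1}/(2m+1)$ and integrate termwise. Applying $\sum_{m\geq0}\bigl(\frac1{m+\theta}-\frac1{m+1-\theta}\bigr)=\pi\cot(\pi\theta)$ at $\theta=-t/4$ gives $\Theta_t=-\frac{\pi}{t+2}\cot\frac{\pi t}{4}$.
\end{itemize}
Combining these, $2^{t/2-1}\cdot4\pi\Theta_t=C_t^{\mathrm a}$.

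\textbf{Your fallback.} It is also correct, and in fact sharper. With $\epsilon=\lambda/(2\abs b^2)$,
\[
\frac{Q^{\mathrm a}_t(u,v)}{\langle(u,v)\rangle^{t+1}}
=\frac{1}{2\sqrt2}\,(1+\epsilon)^{-(t+1)/2}\int_{\R^3}\bigl(\epsilon+\abs p^2\bigr)^{t/2}D(p)\,\dd p ,
\]
which is nonincreasing in $\epsilon$. Its supremum is therefore the limit as $\epsilon\downarrow0$, namely $\sqrt2\,\pi\Theta_t=2^{-(t+1)/2}C_t^{\mathrm a}\leq C_t^{\mathrm a}$. The paper loses the factor $2^{(t+1)/2}$ in its step $\abs W^{t+1}\leq(1+2\abs W^2)^{(t+1)/2}$.
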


\begin{proof}
  It suffices first to consider antisymmetric $f\in C_c^\infty(\R^6)$. The space $C_c^\infty(\R^6)$ is dense in the weighted input space, and the antisymmetrization \[ (P_-g)(x,y)=\frac12\bigl(g(x,y)-g(y,x)\bigr) \] is contractive because the input weight is invariant under $x\leftrightarrow y$. The estimate below therefore extends to every antisymmetric input by density.

  Put $d_{xy}=x-y$. Substituting $k\mapsto d_{xy}-k$ and using antisymmetry give
  \[
  \begin{aligned}
    Tf(x,y)
    &=-\int_{\R^3}\frac{f(x-k,y+k)}{\abs{k-d_{xy}}^2}\,\dd k\\
    &=\frac12\int_{\R^3}f(x-k,y+k)
      \left(\frac1{\abs k^2}-\frac1{\abs{k-d_{xy}}^2}\right)\dd k.
  \end{aligned}
  \]
  Set $u=x-k$, $v=y+k$, $X=u-v$, $Y=u+v$, and $z=k+X/2$.
  Then \[ x=z+\frac Y2,\qquad y=\frac Y2-z,\qquad \left\langle z+\frac Y2,\frac Y2-z\right\rangle^2 =1+2\abs z^2+\frac12\abs Y^2. \]
  Since $t<0$, \[ \left\langle z+\frac Y2,\frac Y2-z\right\rangle^t\leq(1+2\abs z^2)^{t/2}. \] Using this inequality and then applying Tonelli's theorem to the nonnegative integrand gives \[ \int_{\R^6}\langle(x,y)\rangle^t\abs{Tf(x,y)}\,\dd x\,\dd y \leq\frac12\int_{\R^6}\abs{f(u,v)} J_t\!\left(\frac{u-v}{2}\right)\dd u\,\dd v, \] where \[ J_t(W)=\int_{\R^3}(1+2\abs z^2)^{t/2} \left|\frac1{\abs{z-W}^2}-\frac1{\abs{z+W}^2}\right|\dd z. \]

  Suppose $W\neq0$. With $e=W/\abs W$, substitute $z=\abs W r\omega$, where $r>0$ and $\omega\in\mathbb S^2$. Since $t<0$,
  \[ \begin{aligned} J_t(W) &\leq2^{t/2}\abs W^{t+1} \int_0^\infty r^{t+2}\int_{\mathbb S^2} \left|\frac1{\abs{r\omega-e}^2} -\frac1{\abs{r\omega+e}^2}\right|\dd\omega\,\dd r. \end{aligned} \]
  Direct integration in the polar variable $\omega\cdot e$ yields \[ \int_{\mathbb S^2} \left|\frac1{\abs{r\omega-e}^2} -\frac1{\abs{r\omega+e}^2}\right|\dd\omega =\frac{4\pi}{r}\log\left|\frac{r^2+1}{r^2-1}\right|. \]
  Hence \[ J_t(W)\leq2^{t/2+2}\pi \Theta_t\abs W^{t+1}, \] with \[ \Theta_t=\int_0^\infty r^{t+1} \log\left|\frac{r^2+1}{r^2-1}\right|\dd r. \]
  For $-1\leq t<0$, the substitution $\varrho=r^2$, inversion on $(1,\infty)$, and the nonnegative expansion \[ \log\frac{1+\varrho}{1-\varrho}=2\sum_{m=0}^\infty\frac{\varrho^{2m+1}}{2m+1}, \qquad 0<\varrho<1, \] give
  \[
  \begin{aligned}
    \Theta_t
    &=\sum_{m=0}^\infty\frac1{2m+1}
      \left(\frac1{2m+t/2+2}+\frac1{2m-t/2}\right)\\
    &=\frac1{t+2}\sum_{m=0}^\infty
      \left(\frac1{m-t/4}-\frac1{m+1+t/4}\right)\\
    &=-\frac\pi{t+2}\cot\!\left(\frac{\pi t}{4}\right).
  \end{aligned}
  \]
  The last equality follows from \[ \sum_{m=0}^\infty \left(\frac1{m+\theta}-\frac1{m+1-\theta}\right) =\pi\cot(\pi\theta), \qquad 0<\theta<1, \] evaluated at $\theta=-t/4$. Thus \[ 2^{t/2+2}\pi \Theta_t=-\frac{2^{t/2+2}\pi^2}{t+2} \cot\!\left(\frac{\pi t}{4}\right)=2C_t^{\mathrm a}. \]
  If $W=0$, then $J_t(W)=0$.

  Since \[ \langle(u,v)\rangle^2=1+2\abs W^2+\frac12\abs Y^2, \qquad \langle u\rangle^2\langle v\rangle^2\geq\langle(u,v)\rangle^2, \] we have, for $W\neq0$, \[ \abs W^{t+1}\leq(1+2\abs W^2)^{(t+1)/2}\leq(1+2\abs W^2)^{(s+a)/2}\leq\langle(u,v)\rangle^s(\langle u\rangle\langle v\rangle)^a. \]
  Together with $J_t(0)=0$, this yields \[ J_t(W)\leq2C_t^{\mathrm a}\langle(u,v)\rangle^s (\langle u\rangle\langle v\rangle)^a. \]
  Substitution into the preceding integral estimate gives \eqref{eq:antibound}. The density argument completes the proof.
\end{proof}

The estimates lift directly to additional spectator variables. We record the form used later.

\begin{lemma}[Fiberwise lifting]\label{lem:fiber}
  Let $S$ act only on $z\in\R^n$ and suppose that, for every $g$ in its domain, \[ \int_{\R^n}W_{\rm out}(z)\abs{Sg(z)}\,\dd z \leq C\int_{\R^n}W_{\rm in}(z)\abs{g(z)}\,\dd z. \]
  Let $F$ be measurable, suppose that $F_\zeta(z)=F(z,\zeta)$ belongs to this domain for almost every $\zeta\in\R^m$, and assume that $(z,\zeta)\mapsto SF_\zeta(z)$ is measurable. If $b\colon\R^m\to[0,\infty]$ is measurable and the right-hand side below is finite, then
  \[
  \begin{aligned}
    &\int_{\R^m}b(\zeta)\int_{\R^n}
      W_{\rm out}(z)\abs{S F_\zeta(z)}\,\dd z\,\dd\zeta\\
    &\qquad\leq C\int_{\R^m}b(\zeta)\int_{\R^n}
      W_{\rm in}(z)\abs{F_\zeta(z)}\,\dd z\,\dd\zeta.
  \end{aligned}
  \]
  This applies to Propositions~\ref{prop:scalar}--\ref{prop:anti}. In the antisymmetric case, assume that $F_\zeta(x,y)$ is antisymmetric in $(x,y)$ for almost every $\zeta$. The joint weights may then be used because \[ \langle(x,y,\zeta)\rangle^t\leq\langle(x,y)\rangle^t, \qquad \langle(x,y,\zeta)\rangle^s\geq\langle(x,y)\rangle^s \] for $s\geq0$ and $t<0$.
\end{lemma}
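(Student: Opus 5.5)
The proof is Tonelli's theorem applied fiber by fiber, followed by the check that the joint weights satisfy the required monotone comparisons.

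\textbf{Step 1 (fiber estimate).} For almost every $\zeta\in\R^m$, the fiber $F_\zeta$ lies in the domain of $S$. Applying the hypothesis to $g=F_\zeta$ gives
\[
\int_{\R^n}W_{\rm out}(z)\abs{SF_\zeta(z)}\,\dd z\leq C\int_{\R^n}W_{\rm in}(z)\abs{F_\zeta(z)}\,\dd z
\]
for almost every $\zeta$.

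\textbf{Step 2 (integration in $\zeta$).} Multiply this inequality by $b(\zeta)\geq0$ and integrate in $\zeta$. Both integrands $(z,\zeta)\mapsto W_{\rm out}(z)\abs{SF_\zeta(z)}$ and $W_{\rm in}(z)\abs{F_\zeta(z)}$ are nonnegative and jointly measurable. For the first this is exactly the measurability assumption on $(z,\zeta)\mapsto SF_\zeta(z)$; for the second it follows from the measurability of $F$. Tonelli's theorem therefore makes the iterated integrals well defined in $[0,\infty]$, and monotonicity of the integral yields the displayed inequality. Finiteness of the right-hand side is needed only so that the conclusion is informative.

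\textbf{Step 3 (applicability).} For Propositions~\ref{prop:scalar} and \ref{prop:ordinary}, the operator is a positive-kernel convolution, respectively translation, in the $z$ variables only. Its fiberwise action is then jointly measurable by Tonelli applied to the kernel integral. In the antisymmetric case, the hypothesis of Proposition~\ref{prop:anti} holds on each fiber because $F_\zeta$ is antisymmetric for almost every $\zeta$.

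\textbf{Step 4 (joint weights).} To use joint weights, I would first apply the estimate with the fiber weights and then compare pointwise:
\[
\langle(x,y,\zeta)\rangle^2=\langle(x,y)\rangle^2+\abs\zeta^2\geq\langle(x,y)\rangle^2 .
\]
Hence $\langle(x,y,\zeta)\rangle^t\leq\langle(x,y)\rangle^t$ for $t<0$, which replaces the left-hand weight by a smaller one. Likewise $\langle(x,y,\zeta)\rangle^s\geq\langle(x,y)\rangle^s$ for $s\geq0$, which replaces the right-hand weight by a larger one. Both replacements preserve the inequality. Any factor depending only on $\zeta$, such as $\prod_{j}\langle\zeta_j\rangle^\beta$, is absorbed into $b$.

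\textbf{Main obstacle.} The only delicate point is joint measurability of $SF_\zeta(z)$ and the "almost every $\zeta$" domain issue, and both are assumed in the statement. For the operators used in the paper, joint measurability follows from Tonelli applied to the absolutely convergent kernel representation. Beyond that, the lemma is routine.
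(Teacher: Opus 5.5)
Your proposal is correct and follows the paper's proof: apply the active-variable estimate on almost every fiber $F_\zeta$, multiply by $b(\zeta)$, and integrate in $\zeta$ using Tonelli's theorem. You handle the joint weights through the same pointwise monotonicity $\langle(x,y,\zeta)\rangle^2=\langle(x,y)\rangle^2+\abs{\zeta}^2$ that the statement records, and your remarks on joint measurability and on absorbing $\zeta$-only factors into $b$ match how the lemma is used in Theorem~\ref{thm:gain}.
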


\begin{proof}
  Apply the active-variable estimate to $F_\zeta$ for almost every $\zeta$, multiply it by $b(\zeta)$, and integrate in $\zeta$. Tonelli's theorem justifies the order of integration and yields the stated estimate.
\end{proof}

\section{A mixed-regularity gain for the Coulomb resolvent}\label{sec:gain}

This section combines the convolution bounds of Section~\ref{sec:operators} with the two-order decay of the free resolvent. We first establish density and the basic mixed Barron gain, then identify the Fourier-side extension with the $H^1$ realization and incorporate all same-spin antisymmetries. These mapping results supply the high-frequency contraction used in Section~\ref{sec:eigenfunction}.

Fix $\mu>0$ and set \[ \mathcal R_\mu=\left(-\frac12\Delta+\mu\right)^{-1}. \]
Its Fourier multiplier $r_\mu(\xi)=(\abs\xi^2/2+\mu)^{-1}$ satisfies
\begin{equation} \label{eq:resolventbound} r_\mu(\xi)\leq\max\{2,\mu^{-1}\}\langle\xi\rangle^{-2}. \end{equation}

\begin{lemma}[Density]\label{lem:density}
  Let $I\subseteq\{1,\dots,N\}$ be nonempty and let $s,\alpha,\beta\geq0$. The Schwartz functions antisymmetric with respect to $I$ are dense in the antisymmetric subspace of $\Bmix{s}{I}{\alpha,\beta}$. They are also dense in the antisymmetric subspace of
  \begin{equation} \label{eq:intersectiondensity} H^1(\R^{3N})\cap\Bmix{s}{I}{\alpha,\beta}, \end{equation}
  equipped with the sum norm.
\end{lemma}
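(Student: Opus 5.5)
We work entirely on the Fourier side, where the space is a weighted $L^1$ space. The map $f\mapsto\widehat f$ identifies $\Bmix{s}{I}{\alpha,\beta}$ isometrically with $L^1(\R^{3N},w\,\dd\xi)$, where $w=w_{s,I}^{\alpha,\beta}$. The antisymmetry condition with respect to $I$ is preserved by the Fourier transform, because the permutations $U_{ij}$ with $i,j\in I$ commute with it. The weight $w$ is invariant under these permutations, since it depends on the coordinates in $I$ only through the symmetric product $\prod_{i\in I}\langle\xi_i\rangle^\alpha$. Hence the antisymmetrizer
\[
P_I g=\frac1{\abs I!}\sum_{\pi\in S_I}\operatorname{sgn}(\pi)\,g\circ\pi
\]
is a contraction on $L^1(w\,\dd\xi)$. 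It commutes with the Fourier transform, maps Schwartz functions to Schwartz functions, and fixes antisymmetric elements.

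The first density claim then runs as follows. Take an antisymmetric $f$ with $w\widehat f\in L^1$. Since $w\geq1$ is continuous and $C_c^\infty(\R^{3N})$ is dense in $L^1(w\,\dd\xi)$, choose $g_n\in C_c^\infty$ with $\int w\abs{g_n-\widehat f}\,\dd\xi\to0$. Set $f_n=\mathcal F^{-1}(P_Ig_n)$. These are Schwartz functions, antisymmetric with respect to $I$, and
\[
\norm[\Bmix{s}{I}{\alpha,\beta}]{f_n-f}=\norm[L^1(w)]{P_I(g_n-\widehat f)}\leq\norm[L^1(w)]{g_n-\widehat f}\to0 .
\]

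For the intersection with $H^1$, the sum norm is
\[
\norm[L^2(\langle\xi\rangle^2\dd\xi)]{\widehat f}+\norm[L^1(w\,\dd\xi)]{\widehat f}.
\]
We must approximate $\widehat f$ simultaneously in a weighted $L^2$ space and a weighted $L^1$ space. We do this in two steps.
\begin{enumerate}[label=(\roman*),leftmargin=2.2em]
  \item \textbf{Truncation.} Let $\chi_R=\mathbf 1_{\{\abs\xi\le R,\ \abs{\widehat f}\le R\}}$. By dominated convergence, $\chi_R\widehat f\to\widehat f$ in both norms. Each truncation is bounded with compact support, hence lies in $L^1\cap L^\infty$ on a ball.
  \item \textbf{Mollification.} Mollify, $g=\rho_\varepsilon*(\chi_R\widehat f)\in C_c^\infty$. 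Since both weights are bounded above and below on a fixed compact set containing all supports, convergence in unweighted $L^1$ and $L^2$ on that compact set gives convergence in both weighted norms.
\end{enumerate}
The truncation is symmetric under the permutations in $S_I$, because $\abs\xi$ and $\abs{\widehat f}$ are invariant under them, so we may skip it. Either way, we apply $P_I$ at the end; it is contractive in both norms, since both weights are permutation invariant. The inverse Fourier transforms are then the required antisymmetric Schwartz approximants.

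The only step needing care is this simultaneous control of two different integrability scales. The truncate-then-mollify construction handles it, because on compact sets with bounded values $L^1$ and $L^2$ convergence are both controlled by the mollifier. A further technical point is to confirm that an element of $\Sch'$ with $w\widehat f\in L^1$ genuinely has $\widehat f$ equal to a function. This holds because $L^1(w\,\dd\xi)\subset L^1$, so the Fourier-side identification is legitimate.
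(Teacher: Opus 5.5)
Your proof is correct and follows essentially the same route as the paper. The paper also approximates $\widehat u$ in both weighted norms by truncation on balls followed by mollification, with a diagonal choice. It then restores antisymmetry with the signed permutation average, which is contractive because both weights are invariant under permutations of the indices in $I$. Your additional truncation in the values of $\widehat f$ is harmless but unnecessary, since $\mathbf 1_{\{\abs{\xi}\le R\}}\widehat f$ already lies in $L^1\cap L^2$.
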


\begin{proof}
  Write $w=w_{s,I}^{\alpha,\beta}$ and let $u\in\Bmix{s}{I}{\alpha,\beta}$ be antisymmetric with respect to $I$, assuming also that $u\in H^1$ for the second assertion. Truncation on expanding balls, followed by mollification and a diagonal choice, gives $h_n\in C_c^\infty(\R^{3N})$ such that $h_n\to\widehat u$ in $L^1(w\dd\xi)$ and, when $u\in H^1$, in $L^2(\langle\xi\rangle^2\dd\xi)$. Here truncation converges by dominated convergence, while on each fixed enlarged ball the weights are bounded and the standard $L^1$ and $L^2$ approximate-identity theorem applies \cite[Theorem~8.14]{Folland1999}.

  Let $\mathfrak S_I$ be the group of permutations of the indices in $I$, extended by the identity on $I^c$, and set \[ (\mathcal P_Ig)(\xi)=\frac1{\abs{\mathfrak S_I}}\sum_{\pi\in\mathfrak S_I}\operatorname{sgn}(\pi)g(\pi^{-1}\xi). \]
  Every coordinate permutation $\pi$ satisfying $\pi(I)=I$ obeys
  \begin{equation}\label{eq:weight-permutation-invariance} w_{s,I}^{\alpha,\beta}(\pi\xi)=w_{s,I}^{\alpha,\beta}(\xi), \qquad \langle\pi\xi\rangle=\langle\xi\rangle. \end{equation}
  By \eqref{eq:weight-permutation-invariance} and the change of variables $\eta=\pi^{-1}\xi$, every such coordinate permutation is an isometry in both weighted spaces. Hence, by the triangle inequality, \[ \|\mathcal P_I g\|_{L^1(w\,\dd\xi)}\leq \|g\|_{L^1(w\,\dd\xi)}, \qquad \|\mathcal P_I g\|_{L^2(\langle\xi\rangle^2\,\dd\xi)}\leq \|g\|_{L^2(\langle\xi\rangle^2\,\dd\xi)}. \]
  Since $\mathcal P_I\widehat u=\widehat u$, the functions defined by $\widehat u_n=\mathcal P_Ih_n$ belong to $C_c^\infty(\R^{3N})$, are antisymmetric with respect to $I$, and converge to $\widehat u$ in the required norm or sum norm. Their inverse Fourier transforms belong to $\Sch(\R^{3N})$, which proves both assertions.
\end{proof}

For the operator estimates, fix a nonempty set $I\subseteq\{1,\dots,N\}$, let $s,\alpha,\beta\geq0$, and write
\begin{equation}\label{eq:ai}
  a_i=\begin{cases}\alpha,&i\in I,\\ \beta,&i\notin I,\end{cases} \qquad A_I=\max_{1\leq i\leq N}a_i.
\end{equation}
Define \[ D_I=\max\Bigl(\{a_i+a_j\mid1\leq i<j\leq N,\ \{i,j\}\nsubseteq I\}\cup\{0\}\Bigr). \]
Then
\begin{equation}\label{eq:DI-values}
  D_I=\begin{cases}
    0,&I^c=\varnothing,\\
    \alpha+\beta,&\abs{I^c}=1,\\
    \max\{\alpha+\beta,2\beta\},&\abs{I^c}\geq2.
  \end{cases}
\end{equation}
Moreover, $A_I=\alpha$ if $I^c=\varnothing$ and $A_I=\max\{\alpha,\beta\}$ otherwise. In the latter case $D_I\geq A_I$, while in the former $D_I=0$. Hence \eqref{eq:mainconditions} is equivalent to
\begin{equation}\label{eq:operator-mainconditions} s+A_I<1,\qquad s+D_I<1. \end{equation}

\begin{theorem}[Coulomb--resolvent gain]\label{thm:gain}
  Let $I\neq\varnothing$, and let $s,\alpha,\beta\geq0$ satisfy \eqref{eq:mainconditions}. The operator $\mathcal R_\mu V$, initially defined on Schwartz functions that are antisymmetric with respect to $I$, extends uniquely to a bounded map
  \begin{equation} \label{eq:gainmap} \mathcal R_\mu V\colon\Bmix{s}{I}{\alpha,\beta}\cap \{u\mid u\text{ is antisymmetric with respect to }I\} \longrightarrow\Bmix{s+\delta}{I}{\alpha,\beta} \end{equation}
  for some $\delta>0$. One may take any positive $\delta$ satisfying
  \begin{equation} \label{eq:deltachoice} \delta<1-s-A_I,\qquad \delta<1-s-D_I. \end{equation}
\end{theorem}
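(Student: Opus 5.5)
Fix $\delta>0$ satisfying \eqref{eq:deltachoice}; it suffices to bound $\mathcal R_\mu V u$ for Schwartz $u$ antisymmetric with respect to $I$. The extension then follows from Lemma~\ref{lem:density} by density and completeness of the weighted $L^1$ target. On the Fourier side, \eqref{eq:resolventbound} gives
\[
w_{s+\delta,I}^{\alpha,\beta}\,\abs{\widehat{\mathcal R_\mu Vu}}\le C\langle\xi\rangle^{s+\delta-2}\prod_i\langle\xi_i\rangle^{a_i}\,\abs{\widehat{Vu}}.
\]
So the task is to estimate $\widehat{Vu}$ in $L^1$ with the output weight $\langle\xi\rangle^{s+\delta-2}\prod_i\langle\xi_i\rangle^{a_i}$, one Coulomb term at a time. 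Up to constants, a nuclear term $\abs{x_i-R_\nu}^{-1}u$ becomes a modulated scalar convolution $I_2$ in $\xi_i$; the phase $e^{-iR_\nu\cdot k}$ is harmless under absolute values. A pair term $\abs{x_i-x_j}^{-1}u$ becomes the pair operator $T$ acting on $(\xi_i,\xi_j)$.

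The key step is to move the weights onto the active variables. I would use $\langle\xi\rangle^{s+\delta-2}\le\langle\xi\rangle^{s}\,\langle\xi_{\rm act}\rangle^{\delta-2}$, where $\xi_{\rm act}$ is $\xi_i$ or $(\xi_i,\xi_j)$, so $\langle\xi\rangle^{-2+\delta}\le\langle\xi_{\rm act}\rangle^{-2+\delta}$. I would also need Peetre-type inequalities $\langle\xi\rangle^s\le 2^{s/2}\langle\eta\rangle^s\langle\xi-\eta\rangle^s$ to transfer the isotropic weight $\langle\xi\rangle^s$ and the spectator product weights. The spectator weights are unaffected, since convolution acts only in active variables, so Lemma~\ref{lem:fiber} handles them. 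Within the active variables the input weight is kept as $\langle\xi\rangle^s$ times the product. The cleanest route is to bound the output weight by the input weight times a factor that Propositions~\ref{prop:scalar}--\ref{prop:anti} can absorb. For a nuclear term, $\langle\xi\rangle^s\langle\xi_i\rangle^{a_i}\langle\xi\rangle^{\delta-2}$ is compared with $\langle\xi-k e_i\rangle^s\langle\xi_i-k\rangle^{a_i}$ times $\langle\xi_i\rangle^{s+a_i+\delta-2}$, up to a bounded factor. This requires splitting the regions $\abs{k}\lesssim\abs{\xi_i}$ and $\abs{k}\gtrsim\abs{\xi_i}$. Proposition~\ref{prop:scalar} applies with $\tau=s+a_i+\delta-2<-1$, which is exactly $s+A_I+\delta<1$. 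For a pair with $\{i,j\}\nsubseteq I$, Proposition~\ref{prop:ordinary} applies with $t=s+a_i+a_j+\delta-2<-1$, which is exactly $s+D_I+\delta<1$. Here one uses $\langle\xi_i\rangle^{a_i}\langle\xi_j\rangle^{a_j}\le\langle(\xi_i,\xi_j)\rangle^{a_i+a_j}$.

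For a pair $i,j\in I$, the fiber function is antisymmetric under $\xi_i\leftrightarrow\xi_j$, and I use Proposition~\ref{prop:anti}. Set $t=s+\delta-2+2\alpha\cdot 0$; more precisely, keep $(\langle\xi_i\rangle\langle\xi_j\rangle)^\alpha$ as the input product weight with $a=\alpha$, and take output exponent $t=s+\delta-2$, which must lie in $[-1,0)$. The output weight $\langle(\xi_i,\xi_j)\rangle^{t}(\langle\xi_i\rangle\langle\xi_j\rangle)^{\alpha}$ is not directly of the form covered by Proposition~\ref{prop:anti}. I would first reduce it using $\langle\xi_i\rangle\langle\xi_j\rangle\le\langle(\xi_i,\xi_j)\rangle^2$, giving an effective exponent $t'=s+2\alpha+\delta-2$. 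Then Proposition~\ref{prop:anti} needs $t'<0$ and $s+\alpha\ge t'+1=s+2\alpha+\delta-1$, i.e.\ $\alpha+\delta\le1$. Both hold under $s+\alpha+\delta<1$, since $t'<2\alpha-1-\alpha<0$ up to $s$ bookkeeping. If $t'<-1$, the ordinary bound already suffices.

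The main obstacle is this weight transfer. Propositions~\ref{prop:scalar}--\ref{prop:anti} are stated with an unweighted or fixed input, while the actual input carries $\langle\xi\rangle^s\prod\langle\xi_i\rangle^{a_i}$ evaluated at shifted points. Making the Peetre splitting compatible with the antisymmetric difference-kernel cancellation is delicate, because the weights must be symmetric in $(\xi_i,\xi_j)$ so that antisymmetry survives. I would try first to absorb all shifted weights on the input side via $\langle\xi\rangle^{s}\le\langle\xi-\eta\rangle^s\langle\eta\rangle^{s}$ with $\eta=(k,-k)$ for pairs, and check that the extra $\langle k\rangle^s$ can be charged against the kernel decay.
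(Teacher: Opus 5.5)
Your proof is essentially the paper's, and it closes as written. You use the same exponents: $\tau_i=s+a_i+\delta-2$ for nuclear terms, $t_{ij}=s+a_i+a_j+\delta-2$ for $\{i,j\}\nsubseteq I$, and $t_{ij}=s+2\alpha+\delta-2$ with $a=\alpha$ in Proposition~\ref{prop:anti} when $i,j\in I$. You lift them fiberwise by Lemma~\ref{lem:fiber} and finish by density. However, the ``main obstacle'' in your last paragraph does not exist.

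The output weight multiplies $\widehat{\mathcal R_\mu Vu}(\xi)$ outside the $k$-integral. You therefore only need a pointwise upper bound for it in $\xi$, never a comparison with the input weight at the shifted point. Because the total exponent is negative, $\langle\xi\rangle^s$ and the active product weights can simply be absorbed into it. For a nuclear term,
\[
\langle\xi\rangle^{s+\delta-2}\langle\xi_i\rangle^{a_i}\le\langle\xi\rangle^{\tau_i}\le\langle\xi_i\rangle^{\tau_i},
\]
and likewise $\langle\xi\rangle^{s+\delta-2}\langle\xi_i\rangle^{a_i}\langle\xi_j\rangle^{a_j}\le\langle\xi\rangle^{t_{ij}}\le\langle(\xi_i,\xi_j)\rangle^{t_{ij}}$ for pairs. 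Propositions~\ref{prop:scalar} and~\ref{prop:ordinary} are then used with \emph{unweighted} input in the active variables. The spectator weight $b(\zeta)$, the product of $\langle\xi_\ell\rangle^{a_\ell}$ over inactive $\ell$, is untouched by the convolution, and $b(\zeta)\le W_{\rm in}(\xi)$ finishes the estimate. No Peetre inequality is needed, and no splitting into $\abs k\lesssim\abs{\xi_i}$ versus $\abs k\gtrsim\abs{\xi_i}$. Your nuclear comparison in fact holds with constant $1$ by the display above, since the shifted factors $\langle\xi-ke_i\rangle^s\langle\xi_i-k\rangle^{a_i}$ are at least $1$. Your ordinary-pair step already does this implicitly, since your $t$ contains $s$.

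For the antisymmetric pair, the right-hand side of Proposition~\ref{prop:anti} already carries $\langle(\xi_i,\xi_j)\rangle^s(\langle\xi_i\rangle\langle\xi_j\rangle)^\alpha\le\langle\xi\rangle^s(\langle\xi_i\rangle\langle\xi_j\rangle)^\alpha$. You apply it directly to the antisymmetric fiber of $\widehat u$, as you correctly do in your third paragraph, so antisymmetry is never disturbed. The fallback you sketch would be harmful there. Pushing $\langle\xi\rangle^s$ inside the $k$-integral via $\langle\xi\rangle^s\le2^{s/2}\langle\xi-\eta\rangle^s\langle\eta\rangle^s$ requires taking absolute values under the integral. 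That destroys the cancellation between $\abs k^{-2}$ and $\abs{k-d_{xy}}^{-2}$ on which Proposition~\ref{prop:anti} rests. Moreover, $\abs k^{-2}$ has no spare decay in $\R^3$ against which to charge $\langle k\rangle^s$; all integrability comes from the output weight. Delete the last paragraph and replace the Peetre and region-splitting remarks with the absorption above.
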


We prove that the composition $\mathcal R_\mu V$ gains a positive isotropic order $\delta$ in the mixed spectral Barron scale.

\begin{proof}
  By \eqref{eq:operator-mainconditions}, both upper bounds in \eqref{eq:deltachoice} are positive, so such a $\delta$ exists. Since $I\neq\varnothing$, one has $\alpha\leq A_I$, and therefore \[ \delta<1-s-A_I\leq1-\alpha,\qquad \delta<1-s-A_I<2-s-2\alpha. \]
  Here the final strict inequality follows because $(1-\alpha)+(A_I-\alpha)>0$. Thus the two bounds required below for pairs contained in $I$ follow automatically from \eqref{eq:deltachoice}.

  Take $u\in\Sch(\R^{3N})$ with the prescribed antisymmetry. Under the unitary Fourier convention, set $\kappa_{\rm C}=(2\pi^2)^{-1}$. The Coulomb terms satisfy
  \[
    \begin{aligned}
      \widehat{\bigl(\abs{x_i-R_\nu}^{-1}u\bigr)}(\xi)
      &=\kappa_{\rm C}\int_{\R^3}\frac{e^{-ik\cdot R_\nu}}{\abs k^2}\widehat u(\xi_1,\ldots,\xi_i-k,\ldots,\xi_N)\,\dd k,\\
      \widehat{\bigl(\abs{x_i-x_j}^{-1}u\bigr)}(\xi)
      &=\kappa_{\rm C}\int_{\R^3}\frac{\widehat u(\xi_1,\ldots,\xi_i-k,\ldots,\xi_j+k,\ldots,\xi_N)}{\abs k^2}\,\dd k.
    \end{aligned}
  \]
  The Coulomb coefficients and their signs are absorbed into the final operator norm. Define \[ W_{\rm in}(\xi)=\langle\xi\rangle^s\prod_{\ell=1}^N\langle\xi_\ell\rangle^{a_\ell}, \qquad W_{\rm out}(\xi)=\langle\xi\rangle^{s+\delta}\prod_{\ell=1}^N\langle\xi_\ell\rangle^{a_\ell}. \]
  Equation~\eqref{eq:resolventbound} gives \[ W_{\rm out}(\xi)r_\mu(\xi) \leq\max\{2,\mu^{-1}\}\langle\xi\rangle^{s+\delta-2} \prod_{\ell=1}^N\langle\xi_\ell\rangle^{a_\ell}. \]

  Consider a nuclear term acting on the $i$th coordinate. Write \[ \zeta=(\xi_\ell)_{\ell\neq i},\qquad b_i(\zeta)=\prod_{\ell\neq i}\langle\xi_\ell\rangle^{a_\ell}, \qquad \tau_i=s+\delta-2+a_i. \]
  Since $a_i\leq A_I$, \eqref{eq:deltachoice} gives \[ \tau_i\leq s+\delta-2+A_I<-1. \]
  Since $\langle\xi_i\rangle^{a_i}\leq\langle\xi\rangle^{a_i}$ and $\tau_i<0$, \[ W_{\rm out}(\xi)r_\mu(\xi)\leq Cb_i(\zeta)\langle\xi\rangle^{\tau_i}\leq Cb_i(\zeta)\langle\xi_i\rangle^{\tau_i}. \]
  Proposition~\ref{prop:scalar}, applied for almost every fixed $\zeta$, yields
  \begin{align}
    &\norm[\Bmix{s+\delta}{I}{\alpha,\beta}]
      {\mathcal R_\mu(\abs{x_i-R_\nu}^{-1}u)}\notag\\
    &\quad\leq C\int_{\R^{3(N-1)}}b_i(\zeta)
      \int_{\R^3}\langle\xi_i\rangle^{\tau_i}
      \int_{\R^3}\frac{\abs{\widehat u(\xi_i-k,\zeta)}}{\abs k^2}
      \,\dd k\,\dd\xi_i\,\dd\zeta\notag\\
    &\quad\leq C\int_{\R^{3N}}b_i(\zeta)\abs{\widehat u(\xi)}\,\dd\xi
      \leq C\norm[\Bmix{s}{I}{\alpha,\beta}]{u}.
    \label{eq:nucleartermbound}
  \end{align}
  The last inequality follows from $b_i(\zeta)\leq W_{\rm in}(\xi)$.

  Let $i<j$ and write \[ \zeta=(\xi_\ell)_{\ell\notin\{i,j\}}, \qquad b_{ij}(\zeta)=\prod_{\ell\notin\{i,j\}}\langle\xi_\ell\rangle^{a_\ell}. \]
  For functions of all momentum variables, let \[ (T_{ij}f)(\xi)=\int_{\R^3} \frac{f(\xi_1,\ldots,\xi_i-k,\ldots,\xi_j+k,\ldots,\xi_N)}{\abs k^2}\,\dd k. \]
  For $\{i,j\}\nsubseteq I$, set $t_{ij}=s+\delta-2+a_i+a_j$.
  The definition of $D_I$ and \eqref{eq:deltachoice} imply \[ t_{ij}\leq s+\delta-2+D_I<-1. \]
  Since $t_{ij}<0$, \[ W_{\rm out}(\xi)r_\mu(\xi) \leq C b_{ij}(\zeta)\langle\xi\rangle^{t_{ij}} \leq C b_{ij}(\zeta)\langle(\xi_i,\xi_j)\rangle^{t_{ij}}. \]
  Proposition~\ref{prop:ordinary}, applied fiberwise, and the inequality $b_{ij}(\zeta)\leq W_{\rm in}(\xi)$ give
  \begin{equation} \label{eq:ordinarypairbound} \norm[\Bmix{s+\delta}{I}{\alpha,\beta}] {\mathcal R_\mu(\abs{x_i-x_j}^{-1}u)} \leq C\norm[\Bmix{s}{I}{\alpha,\beta}]{u}. \end{equation}

  Suppose next that $i,j\in I$. Then $a_i=a_j=\alpha$, and $\widehat u$ is antisymmetric under interchange of $\xi_i$ and $\xi_j$. Put \[ t_{ij}=s+\delta-2+2\alpha. \]
  Since $\delta<2-s-2\alpha$, one has $t_{ij}<0$. If $t_{ij}<-1$, Proposition~\ref{prop:ordinary} applies. If $-1\leq t_{ij}<0$, then \[ (s+\alpha)-(t_{ij}+1)=1-\alpha-\delta>0. \]
  Proposition~\ref{prop:anti} and Lemma~\ref{lem:fiber} therefore imply
  \[
  \begin{aligned}
    &\int_{\R^{3(N-2)}}b_{ij}(\zeta)\int_{\R^6}
      \langle(\xi_i,\xi_j,\zeta)\rangle^{t_{ij}}
      \abs{T_{ij}\widehat u(\xi_i,\xi_j,\zeta)}
      \,\dd\xi_i\,\dd\xi_j\,\dd\zeta\\
    &\quad\leq C_{t_{ij}}^{\rm a}\int_{\R^{3N}}b_{ij}(\zeta)
      \langle\xi\rangle^s
      \bigl(\langle\xi_i\rangle\langle\xi_j\rangle\bigr)^\alpha
      \abs{\widehat u(\xi)}\,\dd\xi\\
    &\quad=C_{t_{ij}}^{\rm a}\norm[\Bmix{s}{I}{\alpha,\beta}]{u}.
  \end{aligned}
  \]
  Thus \eqref{eq:ordinarypairbound} also holds when $i,j\in I$.

  Summing \eqref{eq:nucleartermbound} and \eqref{eq:ordinarypairbound} over the finitely many Coulomb terms gives \[ \norm[\Bmix{s+\delta}{I}{\alpha,\beta}]{\mathcal R_\mu Vu} \leq C\norm[\Bmix{s}{I}{\alpha,\beta}]{u}, \] where $C$ is independent of $u$. Lemma~\ref{lem:density} extends this estimate uniquely to the domain in \eqref{eq:gainmap}.
\end{proof}

\begin{lemma}[Sobolev bounds for $V$ and $\mathcal R_\mu$]\label{lem:sobolevbounds}
  There is a constant $C_V>0$ such that
  \begin{equation}\label{eq:V-H1-L2} \norm[L^2]{Vf}\leq C_V\norm[H^1]{f}, \qquad f\in H^1(\R^{3N}). \end{equation}
  Moreover,
  \begin{equation}\label{eq:resolvent-L2-H2} \norm[H^2]{\mathcal R_\mu g}\leq\max\{2,\mu^{-1}\}\norm[L^2]{g}, \qquad g\in L^2(\R^{3N}). \end{equation}
\end{lemma}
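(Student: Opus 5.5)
The first estimate \eqref{eq:V-H1-L2} will follow from Hardy's inequality in $\R^3$, applied in one three-dimensional collision variable at a time. The second estimate \eqref{eq:resolvent-L2-H2} is a direct Fourier multiplier bound using \eqref{eq:resolventbound}.

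For \eqref{eq:V-H1-L2}, I split $V$ into its finitely many Coulomb terms. For a nuclear term $\abs{x_i-R_\nu}^{-1}f$, I freeze the remaining coordinates $(x_\ell)_{\ell\neq i}$ and translate $x_i\mapsto x_i+R_\nu$. Hardy's inequality $\int_{\R^3}\abs{g(y)}^2\abs y^{-2}\dd y\leq4\int_{\R^3}\abs{\nabla g}^2\dd y$ then gives, after integrating over the frozen variables with Fubini,
\[
\norm[L^2]{\abs{x_i-R_\nu}^{-1}f}\leq2\norm[L^2]{\nabla_{x_i}f}.
\]
For a pair term $\abs{x_i-x_j}^{-1}f$, I pass to the variables $r=x_i-x_j$ and $c=x_j$, a linear change of variables with unit Jacobian. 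In these variables $\nabla_r$ acts as $\nabla_{x_i}$ on $f$ viewed as a function of the old coordinates. Applying Hardy in $r$ for fixed $c$ and the other coordinates yields
\[
\norm[L^2]{\abs{x_i-x_j}^{-1}f}\leq2\norm[L^2]{\nabla_{x_i}f}.
\]
Summing over terms gives \eqref{eq:V-H1-L2} with $C_V=2\bigl(\sum_{i,\nu}Z_\nu+\binom N2\bigr)$. I would first prove these bounds for $f\in C_c^\infty$ and extend to $H^1$ by density, which is legitimate because the left side is then Cauchy in $L^2$ and an a.e.\ convergent subsequence identifies the limit with $Vf$.

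For \eqref{eq:resolvent-L2-H2}, I use Plancherel with $\norm[H^2]{h}=\norm[L^2]{\langle\xi\rangle^2\widehat h}$. This gives $\norm[H^2]{\mathcal R_\mu g}=\norm[L^2]{\langle\xi\rangle^2r_\mu\widehat g}$, and \eqref{eq:resolventbound} yields $\langle\xi\rangle^2r_\mu(\xi)\leq\max\{2,\mu^{-1}\}$ pointwise. Since \eqref{eq:resolventbound} is stated as a bound with exactly this constant, this part is immediate once that norm convention is fixed.

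The only real point of care is the pair term: one has to check that Hardy's inequality, applied in the relative coordinate, is controlled by a single-particle gradient. The chain rule settles this, since with $f(x_i,x_j)=F(x_i-x_j,x_j)$ one has $\nabla_rF=\nabla_{x_i}f$. The rest of the argument is routine bookkeeping of constants.
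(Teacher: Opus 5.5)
Your proposal is correct and follows the paper's proof: Hardy's inequality applied fiberwise in each collision variable (with a unit-Jacobian linear change of variables for the pair terms), summed over the finitely many Coulomb terms, and then Plancherel combined with the multiplier bound \eqref{eq:resolventbound} for the resolvent. Your version additionally makes explicit the constant $C_V$, the chain-rule identity $\nabla_r F=\nabla_{x_i}f$, and the density extension, all of which the paper leaves implicit.
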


\begin{proof}
  Applying the three-dimensional Hardy inequality fiberwise in each collision variable $x_i-R_\nu$ and, after a linear change of variables, in $x_i-x_j$, and then summing the finitely many Coulomb terms gives \eqref{eq:V-H1-L2}; see also \cite[proof of Lemma~4.1]{Yserentant2026}. The multiplier bound \eqref{eq:resolventbound} and Plancherel's theorem give \eqref{eq:resolvent-L2-H2}, which completes the proof.
\end{proof}

\begin{lemma}[Consistency of the extension]\label{lem:consistency}
  The bounded Fourier-side extension of $\mathcal R_\mu V$ furnished by Theorem~\ref{thm:gain} agrees on the antisymmetric subspace of \eqref{eq:intersectiondensity} with the operator obtained from multiplication by $V$ in position space followed by the $L^2$ resolvent.
\end{lemma}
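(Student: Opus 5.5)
The approach is a density-and-continuity argument: both operators agree on antisymmetric Schwartz functions, both are continuous into a common space of tempered distributions, and Lemma~\ref{lem:density} supplies the approximating sequence.

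Denote the two operators on the antisymmetric subspace of \eqref{eq:intersectiondensity} by $\mathcal T_1$ and $\mathcal T_2$. The operator $\mathcal T_1$ is the Fourier-side extension of Theorem~\ref{thm:gain}, with values in $\Bmix{s+\delta}{I}{\alpha,\beta}$. The operator $\mathcal T_2$ is given by $\mathcal T_2u=\mathcal R_\mu(Vu)$, where $Vu\in L^2$ by \eqref{eq:V-H1-L2} and $\mathcal R_\mu$ is the $L^2$ resolvent, so $\mathcal T_2u\in H^2$ by \eqref{eq:resolvent-L2-H2}.

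The proof proceeds in three steps.

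\textbf{Step 1: agreement on Schwartz functions.} Let $u\in\Sch(\R^{3N})$ be antisymmetric with respect to $I$. Then $Vu\in L^2$, and its Fourier transform is given by the convolution formulas displayed in the proof of Theorem~\ref{thm:gain}. To confirm this, I would check that the distributional Fourier transform of $\abs{x_i-R_\nu}^{-1}u$ equals $\kappa_{\rm C}\,\abs{\cdot}^{-2}*\widehat u$ in the relevant variable. This follows from $\widehat{\abs x^{-1}}=\kappa'\abs\xi^{-2}$ and the convolution theorem for a locally integrable tempered distribution multiplied by a Schwartz function. Multiplying by $r_\mu$ reproduces the $L^2$ resolvent. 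Hence $\mathcal T_1u=\mathcal T_2u$ on this dense class, and this is exactly how $\mathcal T_1$ was defined there.

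\textbf{Step 2: continuity into a common space.} Both operators are continuous from the sum-norm space \eqref{eq:intersectiondensity} into $\Sch'(\R^{3N})$.
\begin{itemize}
\item For $\mathcal T_1$: it is bounded from $\Bmix{s}{I}{\alpha,\beta}$ into $\Bmix{s+\delta}{I}{\alpha,\beta}$, and convergence in any mixed Barron space implies $L^1$ convergence of the Fourier transforms. That in turn implies convergence in $\Sch'$.
\item For $\mathcal T_2$: by Lemma~\ref{lem:sobolevbounds} it is bounded from $H^1$ into $H^2\hookrightarrow L^2\hookrightarrow\Sch'$.
\end{itemize}

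\textbf{Step 3: passage to the limit.} Given $u$ in the antisymmetric subspace of \eqref{eq:intersectiondensity}, Lemma~\ref{lem:density} provides antisymmetric Schwartz functions $u_n$ with $u_n\to u$ in both $H^1$ and $\Bmix{s}{I}{\alpha,\beta}$. Then $\mathcal T_1u_n\to\mathcal T_1u$ and $\mathcal T_2u_n\to\mathcal T_2u$ in $\Sch'$, and $\mathcal T_1u_n=\mathcal T_2u_n$ for every $n$ by Step~1. Since limits in $\Sch'$ are unique, $\mathcal T_1u=\mathcal T_2u$.

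The only genuinely delicate step is Step~1, namely justifying the Fourier convolution identity for $\abs{x_i-x_j}^{-1}u$ with $u$ Schwartz.

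\begin{itemize}
\item I would first handle the single-variable case $\abs{x}^{-1}\varphi$ on $\R^3$. Split $\abs x^{-1}$ into an $L^1$ part and an $L^2$ part. For the $L^1$ part, the convolution theorem holds classically. For the $L^2$ part, it holds via Plancherel.
\item I would then transfer to the pair variable by the orthogonal change of coordinates $(x_i-x_j)/\sqrt2,\ (x_i+x_j)/\sqrt2$, under which the Fourier transform commutes.
\item Absolute convergence of the $k$-integral for Schwartz $\widehat u$ ensures that the formula holds pointwise and agrees with the $L^2$ Fourier transform of $Vu$.
\end{itemize}
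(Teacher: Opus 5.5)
Your proposal is correct and follows the paper's proof. Lemma~\ref{lem:density} supplies antisymmetric Schwartz approximants, and \eqref{eq:V-H1-L2}--\eqref{eq:resolvent-L2-H2} and Theorem~\ref{thm:gain} give convergence of the two images in $H^2$ and in $\Bmix{s+\delta}{I}{\alpha,\beta}$ respectively. The two limits are then identified in $\Sch'(\R^{3N})$; the paper simply takes the Schwartz-level agreement (your Step~1) for granted.

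One small fix is needed in your justification of Step~1. The tail $\mathbf 1_{\{\abs x\geq1\}}\abs x^{-1}$ is not in $L^2(\R^3)$, so the position-space $L^1+L^2$ split fails. Do the split on the Fourier side instead, $\abs\xi^{-2}=\mathbf 1_{\{\abs\xi<1\}}\abs\xi^{-2}+\mathbf 1_{\{\abs\xi\geq1\}}\abs\xi^{-2}\in L^1+L^2$. Alternatively, rely directly on the $\Sch'\cdot\Sch$ convolution theorem you already cite.
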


\begin{proof}
  Let $\mathcal E$ denote the Fourier-side extension in Theorem~\ref{thm:gain}. Let $u\in H^1(\R^{3N})\cap\Bmix{s}{I}{\alpha,\beta}$ be antisymmetric with respect to $I$, and let $(u_n)\subset\Sch(\R^{3N})$ be the approximating sequence furnished by Lemma~\ref{lem:density}. Equations~\eqref{eq:V-H1-L2} and~\eqref{eq:resolvent-L2-H2} give \[ \norm[H^2]{\mathcal R_\mu V(u_n-u)} \leq\max\{2,\mu^{-1}\}C_V\norm[H^1]{u_n-u} \longrightarrow0. \]
  Theorem~\ref{thm:gain} also gives \[ \norm[\Bmix{s+\delta}{I}{\alpha,\beta}]{\mathcal E(u_n-u)} \leq C\norm[\Bmix{s}{I}{\alpha,\beta}]{u_n-u} \longrightarrow0. \]
  Since $\mathcal Eu_n=\mathcal R_\mu Vu_n$ for every $n$, their limits in $\Sch'(\R^{3N})$ coincide. Hence $\mathcal Eu=\mathcal R_\mu Vu$, as asserted.
\end{proof}

Set \[ \mathfrak A_\sigma=\bigcap_{J\in\mathcal I_\sigma}\ \bigcap_{\substack{i,j\in J\\i<j}}\ker(U_{ij}+\operatorname{Id})\subset\Sch'(\R^{3N}). \]

For $I\in\mathcal I_\sigma$, use the exponents $a_i$ from \eqref{eq:ai} and define \[ D_{I,\sigma}=\max\Bigl(\{a_i+a_j\mid1\leq i<j\leq N,\ \sigma_i\neq\sigma_j\}\cup\{0\}\Bigr). \]
Since $I$ is an occupied spin block,
\[
  A_I=\begin{cases}
    \alpha,&\abs{\mathcal I_\sigma}=1,\\
    \max\{\alpha,\beta\},&\abs{\mathcal I_\sigma}\geq2,
  \end{cases}
  \qquad
  D_{I,\sigma}=\begin{cases}
    0,&\abs{\mathcal I_\sigma}=1,\\
    \alpha+\beta,&\abs{\mathcal I_\sigma}=2,\\
    \max\{\alpha+\beta,2\beta\},&\abs{\mathcal I_\sigma}\geq3.
  \end{cases}
\]
For $\abs{\mathcal I_\sigma}\geq2$, one has $D_{I,\sigma}\geq A_I$. Thus \eqref{eq:spinconditions} is equivalent, for every $I\in\mathcal I_\sigma$, to
\begin{equation}\label{eq:operator-spinconditions} s+A_I<1,\qquad s+D_{I,\sigma}<1. \end{equation}

\begin{theorem}[Spin-partition Coulomb--resolvent gain]
  \label{thm:spingain} Fix $I\in\mathcal I_\sigma$, let $s,\alpha,\beta\geq0$, and assume \eqref{eq:spinconditions}. The operator $\mathcal R_\mu V$, initially defined on $\Sch(\R^{3N})\cap\mathfrak A_\sigma$, extends uniquely to a bounded map \[ \mathcal R_\mu V\colon \Bmix{s}{I}{\alpha,\beta}\cap\mathfrak A_\sigma \longrightarrow \Bmix{s+\delta}{I}{\alpha,\beta} \] for some $\delta>0$. One may take any positive $\delta$ satisfying
  \begin{align*} \delta&<1-s-A_I,& \delta&<1-s-D_{I,\sigma}. \end{align*}
\end{theorem}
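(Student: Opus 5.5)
The plan is to rerun the proof of Theorem~\ref{thm:gain} term by term, replacing the single antisymmetry set $I$ by the full family of same-spin antisymmetries encoded in $\mathfrak A_\sigma$. First, $\delta>0$ exists because \eqref{eq:spinconditions} is equivalent to \eqref{eq:operator-spinconditions}. As in Theorem~\ref{thm:gain}, $\alpha\le A_I$ gives $\delta<1-\alpha$ and $\delta<2-s-2\alpha$. Next, with $W_{\rm in},W_{\rm out}$ defined from the exponents $a_i$ in \eqref{eq:ai}, the bound \eqref{eq:resolventbound} reduces everything to weighted convolution estimates for each Coulomb term acting on $\widehat u$, where $u\in\Sch\cap\mathfrak A_\sigma$.

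The nuclear terms are handled exactly as in \eqref{eq:nucleartermbound}, since $\tau_i\le s+\delta-2+A_I<-1$. For a pair $i<j$ with $\sigma_i\neq\sigma_j$, I set $t_{ij}=s+\delta-2+a_i+a_j\le s+\delta-2+D_{I,\sigma}<-1$. Proposition~\ref{prop:ordinary} with Lemma~\ref{lem:fiber} then gives \eqref{eq:ordinarypairbound} with no symmetry needed. For a pair with $\sigma_i=\sigma_j$, the electrons lie in a common occupied block $J\in\mathcal I_\sigma$. Since $u\in\mathfrak A_\sigma$, $\widehat u$ is antisymmetric under $\xi_i\leftrightarrow\xi_j$. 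There are two subcases.
\begin{enumerate}[label=(\alph*)]
  \item If $J=I$, then $a_i=a_j=\alpha$, and the argument for $i,j\in I$ in Theorem~\ref{thm:gain} applies verbatim. One uses Proposition~\ref{prop:ordinary} if $t_{ij}<-1$, and Proposition~\ref{prop:anti} with $a=\alpha$ together with Lemma~\ref{lem:fiber} if $-1\le t_{ij}<0$.
  \item If $J\neq I$, then $a_i=a_j=\beta$ and $t_{ij}=s+\delta-2+2\beta$. This is the only genuinely new case, because the pair is not covered by $D_{I,\sigma}$. I would use antisymmetry in this pair and invoke Proposition~\ref{prop:anti} with $a=\beta$. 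This needs $t_{ij}<0$ and $s+\beta\ge t_{ij}+1$, that is, $\delta\le 1-\beta$. Both follow from $\delta<1-s-A_I$ and $\beta\le A_I$, since then $t_{ij}<-1+\beta-s\le 0$ as $\beta<1$. If instead $t_{ij}<-1$, Proposition~\ref{prop:ordinary} suffices.
\end{enumerate}
In the antisymmetric application, the spectator weight $b_{ij}(\zeta)$ stays symmetric because both active coordinates carry the same exponent, so Lemma~\ref{lem:fiber} applies.

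Summing the finitely many terms gives $\norm[\Bmix{s+\delta}{I}{\alpha,\beta}]{\mathcal R_\mu Vu}\le C\norm[\Bmix{s}{I}{\alpha,\beta}]{u}$ on $\Sch\cap\mathfrak A_\sigma$. The last step is to extend by density, and I expect the main obstacle to be a version of Lemma~\ref{lem:density} for the whole group of block permutations. Here I would replace $\mathcal P_I$ by the product over $J\in\mathcal I_\sigma$ of the antisymmetrizers $\mathcal P_J$. These commute because the blocks are disjoint. The weight $w_{s,I}^{\alpha,\beta}$ is invariant under every permutation preserving each block, since such permutations preserve $I$ and $I^c$. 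So the combined projector is contractive in $L^1(w\,\dd\xi)$ and fixes $\widehat u$ for $u\in\mathfrak A_\sigma$, and the truncation–mollification argument of Lemma~\ref{lem:density} goes through unchanged. Uniqueness of the extension then follows from density.
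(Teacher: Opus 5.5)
Your proposal is correct and follows essentially the same route as the paper. Nuclear and cross-block pair terms go through Propositions~\ref{prop:scalar}--\ref{prop:ordinary}; same-block pairs go through Proposition~\ref{prop:anti}, where the paper merges your subcases (a) and (b) by writing the common exponent as $c\in\{\alpha,\beta\}$ with $c\leq A_I$; and density uses the product of block antisymmetrizers, which is contractive because each block lies in $I$ or in $I^c$.

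There is one harmless slip in subcase (b). The correct bound is $t_{ij}<-1+2\beta-A_I\leq-1+\beta$, not $-1+\beta-s$. Still, $t_{ij}<0$ and $\delta\leq1-\beta$ follow from $\beta\leq A_I$ and $s+A_I<1$.
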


\begin{proof}
  Equation~\eqref{eq:operator-spinconditions} makes both upper bounds imposed on $\delta$ positive.

  For a nuclear term, the argument leading to \eqref{eq:nucleartermbound} uses only $\delta<1-s-A_I$ and therefore applies unchanged.

  Suppose that $i$ and $j$ belong to different spin blocks. Then \[ a_i+a_j\leq D_{I,\sigma}, \qquad t_{ij}=s+\delta-2+a_i+a_j \leq s+\delta-2+D_{I,\sigma}<-1. \]
  Thus \eqref{eq:ordinarypairbound} holds for every pair in distinct spin blocks.

  Suppose instead that $i$ and $j$ lie in the same spin block. Their coordinate exponents have a common value $c\in\{\alpha,\beta\}$, and $\widehat u$ is antisymmetric in $(\xi_i,\xi_j)$. Since $c\leq A_I$ and $s+c\leq s+A_I<1$, \[ \delta<1-s-A_I\leq1-c<2-s-2c. \]
  With $a_i=a_j=c$, the antisymmetric-pair argument in the proof of Theorem~\ref{thm:gain} gives $t_{ij}=s+\delta-2+2c<0$ and $s+c-(t_{ij}+1)=1-c-\delta>0$, so \eqref{eq:ordinarypairbound} holds. These two cases cover every electron pair, and summation proves the asserted boundedness.

  For the density argument, define \[ \mathcal P_\sigma= \prod_{J\in\mathcal I_\sigma}\left( \frac1{\abs{\mathfrak S_J}} \sum_{\pi\in\mathfrak S_J}\operatorname{sgn}(\pi)U_\pi \right), \] where $U_\pi$ permutes the variables in $J$ and $\widehat{U_\pi f}=U_\pi\widehat f$. Each spin block lies entirely in $I$ or in $I^c$, so every $\pi\in\mathfrak S_J$ satisfies $\pi(I)=I$ and \eqref{eq:weight-permutation-invariance} applies. Hence \[ \norm[\Bmix{s}{I}{\alpha,\beta}]{\mathcal P_\sigma f} \leq\norm[\Bmix{s}{I}{\alpha,\beta}]{f}, \qquad \norm[H^1]{\mathcal P_\sigma f}\leq\norm[H^1]{f}. \]
  Given $u\in\Bmix{s}{I}{\alpha,\beta}\cap\mathfrak A_\sigma$, take the sequence $(u_n)\subset\Sch(\R^{3N})$ furnished by Lemma~\ref{lem:density}. Then $v_n=\mathcal P_\sigma u_n$ satisfies $v_n\to\mathcal P_\sigma u=u$ in the mixed Barron norm, and also in $H^1$ when $u\in H^1$. This proves density in $\mathfrak A_\sigma$, and the argument of Lemma~\ref{lem:consistency} proves consistency of the extension.
\end{proof}

\begin{remark}\label{rem:regions}
  The last case in \eqref{eq:DI-values} is the ordinary-pair restriction obtained when only antisymmetry on $I$ is used. When additional spin-block antisymmetries or some pair types are absent, Theorem~\ref{thm:spinrefined} retains the sharper value $D_{I,\sigma}$.
\end{remark}

\section{Proof of the eigenfunction theorem}\label{sec:eigenfunction}

The gain in Theorem~\ref{thm:gain} makes the high-frequency part of the resolvent fixed-point map contractive, while Plancherel's theorem controls the low-frequency part. We formulate the eigenvalue equation as a fixed point, prove contraction on the relevant $H^1$ and mixed Barron spaces, and identify the solution by a Neumann series. This adapts the argument of \cite[Lemmas~4.5--4.6 and Theorem~4.7]{Yserentant2026} to the mixed spectral Barron spaces and proves Theorems~\ref{thm:mainI} and~\ref{thm:spinrefined}.

For a nonempty set $I\subseteq\{1,\dots,N\}$, set \[ \mathfrak A_I=\{u\in\Sch'(\R^{3N})\mid U_{ij}u=-u\text{ for all distinct }i,j\in I\}, \] and choose $\delta>0$ as in Theorem~\ref{thm:gain}.

Fix $\mu>0$. The estimate~\eqref{eq:V-H1-L2} gives $V\psi\in L^2(\R^{3N})$. The eigenvalue equation therefore implies \[ \left(-\frac12\Delta+\mu\right)\psi=\bigl((E+\mu)\operatorname{Id}-V\bigr)\psi \] in $L^2(\R^{3N})$. Applying $\mathcal R_\mu$ yields
\begin{equation} \label{eq:fixedpoint} \psi=\mathcal A\psi, \qquad \mathcal A=\mathcal R_\mu\bigl((E+\mu)\operatorname{Id}-V\bigr). \end{equation}

The choice of $\delta$ implies $\delta<1-s-A_I\leq1$. Since $w_{s+\delta,I}^{\alpha,\beta}(\xi)=\langle\xi\rangle^\delta w_{s,I}^{\alpha,\beta}(\xi)$ and $\sup_{\xi\in\R^{3N}}\langle\xi\rangle^\delta/(\abs\xi^2/2+\mu)<\infty$, the scalar part of $\mathcal A$ satisfies \[ \norm[\Bmix{s+\delta}{I}{\alpha,\beta}]{\mathcal R_\mu(E+\mu)u} \leq C_{\mu,\delta}\abs{E+\mu} \norm[\Bmix{s}{I}{\alpha,\beta}]{u},
\]
which together with Theorem~\ref{thm:gain} gives
\begin{equation} \label{eq:Abarron} \norm[\Bmix{s+\delta}{I}{\alpha,\beta}]{\mathcal Au} \leq C_{\rm B}\norm[\Bmix{s}{I}{\alpha,\beta}]{u}, \qquad u\in\Bmix{s}{I}{\alpha,\beta}\cap\mathfrak A_I.
\end{equation}
The estimates~\eqref{eq:V-H1-L2} and~\eqref{eq:resolvent-L2-H2} give \[ \norm[H^2]{\mathcal Au}\leq\max\{2,\mu^{-1}\}\bigl(\abs{E+\mu}\norm[L^2]{u}+\norm[L^2]{Vu}\bigr)\leq C_{\rm H}\norm[H^1]{u}. \]
Lemma~\ref{lem:consistency} also ensures that this position-space realization of $\mathcal A$ agrees on \[ H^1(\R^{3N})\cap\Bmix{s}{I}{\alpha,\beta}(\R^{3N})\cap\mathfrak A_I \] with the Fourier-side extension used in \eqref{eq:Abarron}.

For $K>0$, define \[ \widehat{P_Kf}(\xi)=\mathbf1_{\{\abs\xi\geq K\}}\widehat f(\xi), \qquad \widehat{Q_Kf}(\xi)=\mathbf1_{\{\abs\xi<K\}}\widehat f(\xi). \]
On $\{\abs\xi\geq K\}$, one has $w_{s,I}^{\alpha,\beta}(\xi)\leq\langle K\rangle^{-\delta}w_{s+\delta,I}^{\alpha,\beta}(\xi)$. It follows from \eqref{eq:Abarron} that
\begin{equation} \label{eq:Bcontraction} \norm[\Bmix{s}{I}{\alpha,\beta}]{P_K\mathcal Au} \leq C_{\rm B}\langle K\rangle^{-\delta} \norm[\Bmix{s}{I}{\alpha,\beta}]{u}. \end{equation}
Similarly, \[ \norm[H^1]{P_Kf}^2 =\int_{\abs\xi\geq K}\langle\xi\rangle^2\abs{\widehat f(\xi)}^2\,\dd\xi \leq\langle K\rangle^{-2}\norm[H^2]{f}^2, \] and therefore
\begin{equation} \label{eq:Hcontraction} \norm[H^1]{P_K\mathcal Au} \leq C_{\rm H}\langle K\rangle^{-1}\norm[H^1]{u}. \end{equation}

Set \[ X_I=H^1(\R^{3N})\cap\Bmix{s}{I}{\alpha,\beta}(\R^{3N})\cap\mathfrak A_I, \qquad Y_I=H^1(\R^{3N})\cap\mathfrak A_I, \] and equip $X_I$ with \[ \norm[X_I]{u}=\norm[H^1]{u}+\norm[\Bmix{s}{I}{\alpha,\beta}]{u}. \]
Each antisymmetry constraint is the kernel of the continuous map $u\mapsto U_{ij}u+u$ on both ambient spaces, so $X_I$ and $Y_I$ are Banach spaces. The operators $V$, $\mathcal R_\mu$, $P_K$, and $Q_K$ commute with permutations of electron coordinates and hence preserve $\mathfrak A_I$. Define \[ \mathcal T_K=P_K\mathcal A, \qquad \gamma_K=\max\left\{C_{\rm B}\langle K\rangle^{-\delta}, C_{\rm H}\langle K\rangle^{-1}\right\}. \]

The estimates~\eqref{eq:Bcontraction} and \eqref{eq:Hcontraction} imply
\[ \norm[X_I]{\mathcal T_Ku}\leq \gamma_K\norm[X_I]{u}, \qquad \norm[H^1]{\mathcal T_Ku}\leq \gamma_K\norm[H^1]{u}.
\]
Choosing $K$ sufficiently large implies $\gamma_K<1$. Then
$\mathcal T_K$ is a strict contraction on both $X_I$ and $Y_I$.

The low-frequency part belongs to $X_I$. By Cauchy--Schwarz and Plancherel's theorem,
\begin{align}
  \norm[\Bmix{s}{I}{\alpha,\beta}]{Q_K\psi}
  &=\int_{\abs\xi<K}w_{s,I}^{\alpha,\beta}(\xi)\abs{\widehat\psi(\xi)}\,\dd\xi\notag\\
  &\leq\left(\int_{\abs\xi<K}\bigl(w_{s,I}^{\alpha,\beta}(\xi)\bigr)^2\,\dd\xi\right)^{1/2}
    \norm[L^2]{\psi}
  =C_{K,s,\alpha,\beta}\norm[L^2]{\psi}.
  \label{eq:lowfrequency}
\end{align}
Moreover, $Q_K\psi\in H^1\cap\mathfrak A_I$. By \eqref{eq:Abarron} and the $H^1$-to-$H^2$ estimate, \[ \mathcal A(Q_K\psi)\in H^2\cap\Bmix{s+\delta}{I}{\alpha,\beta}\cap\mathfrak A_I\subset X_I. \] Hence $g_K=P_K\mathcal A(Q_K\psi)\in X_I$.

Applying $P_K$ to \eqref{eq:fixedpoint}, we obtain
\[ P_K\psi=\mathcal T_K(P_K\psi)+g_K. \]
Since $\norm[\mathcal L(X_I)]{\mathcal T_K}<1$, the Neumann series \[ v=(\operatorname{Id}-\mathcal T_K)^{-1}g_K=\sum_{n=0}^\infty \mathcal T_K^ng_K \] converges in $X_I$ and gives the unique solution of $v=\mathcal T_Kv+g_K$ in that space. The function $P_K\psi\in Y_I$ satisfies the same equation. If $h_1,h_2\in Y_I$ are two solutions, then \[ \norm[H^1]{h_1-h_2} =\norm[H^1]{\mathcal T_K(h_1-h_2)} \leq \gamma_K\norm[H^1]{h_1-h_2}. \]
Since $\gamma_K<1$, one has $h_1=h_2$. Thus $P_K\psi=v\in X_I$, which together with~\eqref{eq:lowfrequency} proves Theorem~\ref{thm:mainI}.
\begin{remark}[Redundancy of the same-block bounds]
The resolvent estimate for pairs contained in $I$ also requires $\delta<1-\alpha$ and $\delta<2-s-2\alpha$. These inequalities impose no additional restriction on Theorem~\ref{thm:mainI}. Indeed, \eqref{eq:mainconditions} implies $s+\alpha<1$, while Theorem~\ref{thm:gain} permits \[ \delta<1-s-A_I\leq1-s-\alpha\leq1-\alpha, \qquad 1-s-\alpha<2-s-2\alpha. \]
\end{remark}

For Theorem~\ref{thm:spinrefined}, \eqref{eq:spin-antisymmetry} gives $\psi\in\mathfrak A_\sigma$. Fix $I\in\mathcal I_\sigma$, replace Theorem~\ref{thm:gain} by Theorem~\ref{thm:spingain}, and use \[ X_{I,\sigma}=H^1(\R^{3N})\cap\Bmix{s}{I}{\alpha,\beta}(\R^{3N})\cap\mathfrak A_\sigma, \qquad Y_\sigma=H^1(\R^{3N})\cap\mathfrak A_\sigma. \]
All operators used above preserve $\mathfrak A_\sigma$. The estimates \eqref{eq:Abarron}--\eqref{eq:Hcontraction}, the low-frequency estimate, and the two uniqueness arguments remain valid in these spaces, giving $\psi\in\Bmix{s}{I}{\alpha,\beta}$. Since \eqref{eq:spinconditions} is independent of $I$, this holds for every $I\in\mathcal I_\sigma$. This proves~\eqref{eq:spinintersection}.
\begin{proof}[Proof of Corollary~\ref{cor:FLinterpolation}]
  Theorem~\ref{thm:spinrefined} gives $w_{s_0,I}^{\alpha_0,\beta_0}\widehat\psi\in L^1$.
Under the conversion $\xi=2\pi\zeta$ from Meng's Fourier-transform convention to ours, the mixed-Sobolev estimate \cite[(2.3)--(2.7) and Corollary~2.4]{Meng2023} gives $w_{1,I}^{\alpha_2,\beta_2}\widehat\psi\in L^2(\R^{3N})$. Moreover, \[ w_{s_\theta,I}^{\alpha_\theta,\beta_\theta} =\bigl(w_{s_0,I}^{\alpha_0,\beta_0}\bigr)^{1-\theta} \bigl(w_{1,I}^{\alpha_2,\beta_2}\bigr)^\theta, \qquad \frac1p=(1-\theta)+\frac\theta2. \]
Since $1<p<2$, the exponents $(2-p)^{-1}$ and $(p-1)^{-1}$ are conjugate. We employ H\"older's inequality to obtain
\[
  \begin{aligned} \int_{\R^{3N}}\bigl(w_{s_\theta,I}^{\alpha_\theta,\beta_\theta}\abs{\widehat\psi}\bigr)^p\dd\xi
    &\leq
      \left(\int_{\R^{3N}}w_{s_0,I}^{\alpha_0,\beta_0}\abs{\widehat\psi}\,\dd\xi\right)^{p(1-\theta)}\\
    &\quad\times
      \left(\int_{\R^{3N}}\bigl(w_{1,I}^{\alpha_2,\beta_2}\abs{\widehat\psi}\bigr)^2\dd\xi\right)^{p\theta/2}.
  \end{aligned}
  \]
Taking the $p$th root, we prove the assertion.
\end{proof}

\section{Proofs of the sharpness results}\label{sec:consequences}

We first convert a nonvanishing Coulomb cusp into failure of the mixed Fourier $L^1$ norm at the boundary exponents and then apply this obstruction to the atomic states in Proposition~\ref{prop:atomicsharpness} and Corollary~\ref{cor:uniformsharpness}.

\begin{lemma}[Localized cusp obstruction]\label{lem:cuspobstruction}
  Let $I\subseteq\{1,\dots,N\}$ be nonempty, let $s,\alpha,\beta\geq0$, and let $u\in\Sch'(\R^{3N})$. Suppose that an open set $\mathcal U\subset\R^3\times\R^{3N-3}$ meets $\{r=0\}$ and that \[ u(r,y)=u_0(r,y)+\abs r u_1(r,y), \qquad (r,y)\in\mathcal U, \] where $u_0$ and $u_1$ are real analytic on $\mathcal U$. If $u_1(0,\cdot)\not\equiv0$ on $\{y\mid(0,y)\in\mathcal U\}$, then the following assertions hold.
  \begin{enumerate}[label=\textup{(\roman*)}]
    \item If $r=x_i-R_\nu$ is a nuclear collision variable, then $u\notin\Bmix{s}{I}{\alpha,\beta}$ whenever $s+a_i\geq1$.
    \item If $r=x_i-x_j$ is an electron--electron collision variable, then $u\notin\Bmix{s}{I}{\alpha,\beta}$ whenever $s+a_i+a_j\geq1$.
  \end{enumerate}
\end{lemma}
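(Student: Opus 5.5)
We argue by contradiction: assume $u\in\Bmix{s}{I}{\alpha,\beta}$ at a boundary exponent and localize to show that the cusp $\abs r u_1$ would then have a Fourier transform too integrable for a genuine $\abs r$ singularity. First, reduce to the endpoint: the weights are monotone in $(s,\alpha,\beta)$, so it suffices to treat $s+a_i=1$ in case (i) and $s+a_i+a_j=1$ in case (ii). (If the sum exceeds $1$, lower $s$ first, and $a_i$ or $a_j$ only as needed, which only enlarges the space.) Next, change variables linearly so that $r$ is the active collision coordinate. In case (i) this is a translation in $x_i$. In case (ii) take $r=x_i-x_j$ and $c=(x_i+x_j)/2$, with the remaining coordinates as spectators; the dual variables are then $\rho=(\xi_i-\xi_j)/2$ and $\xi_i+\xi_j$.

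The key observation is the lower bound on the weight. In case (ii), $\langle\xi\rangle^s\langle\xi_i\rangle^{a_i}\langle\xi_j\rangle^{a_j}\gtrsim \langle\rho\rangle^{s}\cdot(\cdots)$ is not immediate because of the product structure. Instead we restrict to a cone in which $\abs{\xi_i+\xi_j}$ and the spectator momenta are bounded by a multiple of $\abs\rho$. On that cone $\abs{\xi_i}\sim\abs{\xi_j}\sim\abs\rho$ for large $\rho$, so the weight is $\gtrsim\langle\rho\rangle^{s+a_i+a_j}=\langle\rho\rangle$. In case (i) the same cone argument with $\xi_i$ dominant gives weight $\gtrsim\langle\xi_i\rangle^{s+a_i}=\langle\xi_i\rangle$. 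The obstacle is that $L^1$ membership of the full transform does not directly give information on a cone. We handle this by multiplying by a smooth cutoff $\chi(r,y)$ supported in $\mathcal U$, chosen so that $\chi u_1(0,\cdot)\not\equiv0$, and with $\chi$ of product form and Schwartz. Convolution with $\widehat\chi\in L^1$ with rapid decay preserves weighted $L^1$, by Peetre's inequality $\langle\xi+\eta\rangle^m\leq 2^{m/2}\langle\xi\rangle^m\langle\eta\rangle^{m}$ applied to each factor. Hence $\chi u\in\Bmix{s}{I}{\alpha,\beta}$ and $\chi u_0\in\Sch$. It follows that $\abs{\rho}\,\widehat{v}\in L^1$ over the cone, where $v=\chi\abs r u_1$.

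To reach a contradiction, take the partial Fourier transform in $r$ only. Fix the smooth compactly supported function $g(r,y)=\chi u_1$, which is analytic near $r=0$. Then $\abs r g(r,y)=\abs r g(0,y)+\abs r\,r\cdot\nabla_r g(0,y)+\cdots$. The leading term has three-dimensional transform $-c\abs{\rho}^{-4}\,\widehat{g(0,\cdot)}(\eta)$ for large $\abs\rho$, plus terms decaying faster, namely $O(\abs\rho^{-5})$ after smoothing. Multiplying by $\abs\rho$ gives $\abs\rho^{-3}$ on a cone region of fixed angular fraction, with the transverse variables $\eta$ bounded. Integrating $\abs\rho^{-3}$ over $\{\abs\rho>R\}$ in $\R^3$ diverges logarithmically, provided $\widehat{g(0,\cdot)}$ is nonzero on a set of positive measure of bounded $\eta$. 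This holds since $g(0,\cdot)\not\equiv0$. This log divergence is the contradiction.

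The main technical step, and the one most likely to need care, is justifying the asymptotic $\widehat{\abs r g}(\rho,\eta)\sim -c\abs\rho^{-4}\widehat{g(0,\cdot)}(\eta)$ uniformly for $\eta$ in a compact set, and controlling the remainder in $L^1$ of the cone. The plan here is to write $\abs r g=\abs r\phi(r)g(0,y)+\abs r(g-\phi g(0,\cdot))$, where $\phi$ is a radial bump. In the second term the factor vanishes to first order at $r=0$, so it is $C^{2,1}$-type in $r$ with transform $O(\abs\rho^{-5})$ after integration by parts. Its weighted contribution $\abs\rho\cdot\abs\rho^{-5}$ is then integrable, and the explicit first term gives the divergence.
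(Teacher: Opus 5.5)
Your overall architecture matches the paper's. You localize with a product cutoff via Peetre's inequality, restrict to $\eta$ in a small ball $\mathcal O$ where $\widehat{g(0,\cdot)}$ does not vanish and to $\abs\rho$ large, and bound the weight below by $\abs\rho^{s+a_i+a_j}$. The leading cusp term $-c\abs\rho^{-4}\widehat{g(0,\cdot)}(\eta)$ then produces the divergence. The gap is in the step you flag as the main one, the remainder estimate. The function $\abs r\bigl(g-\phi\,g(0,\cdot)\bigr)$ contains the first-order Taylor term $\phi(r)\abs r\,r\cdot\nabla_r g(0,y)$, which near $r=0$ is homogeneous of degree $2$ in $r$. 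Such a function is only $C^{1,1}$, not $C^{2,1}$: its second derivatives contain factors like $r_kr_\ell/\abs r^2$, which are discontinuous at the origin. Its fifth $r$-derivatives behave like $\abs r^{-3}$, which is not locally integrable in $\R^3$. Integration by parts with $L^1$ bounds therefore stops after four steps and yields only $O(\abs\rho^{-4})$, the same order as the leading term. With that bound, $\abs\rho$ times the remainder is only $O(\abs\rho^{-3})$. That is exactly the non-integrable rate you are trying to exclude, so your argument (subtract an integrable remainder, keep a divergent leading term) does not close.

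The decay $O(\abs\rho^{-5})$ you claim is true, but it requires treating the linear term explicitly, as the paper does. Expand one order further: $g(r,y)=g(0,y)+r\cdot\nabla_rg(0,y)+\sum_{k,\ell}r_kr_\ell C_{k\ell}(r,y)$ with smooth $C_{k\ell}$.

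For the linear term, use $\widehat{r_k\phi\abs r}=i\partial_{\rho_k}\widehat{\phi\abs r}$ together with $\widehat{\phi\abs r}(\rho)=-2\sqrt{2/\pi}\,\abs\rho^{-4}+O(\abs\rho^{-M})$. This expansion holds with all $\rho$-derivatives when $\phi\equiv1$ near $0$, because $\Delta_r^2(\phi\abs r)+8\pi\delta_0$ is then smooth with compact support. It gives $O(\abs\rho^{-5})$ uniformly in $\eta$, since $\widehat{\partial_{r_k}g(0,\cdot)}$ is bounded. The same identity also justifies the leading asymptotic, which your proposal leaves vague.

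The genuine remainder $\abs r\,r_kr_\ell C_{k\ell}$ has fifth derivatives of size $O(\abs r^{-2})$, which are locally integrable. Five integrations by parts in a coordinate $r_{k_0}$ with $\abs{\rho_{k_0}}\geq\abs\rho/\sqrt3$ then give $O(\abs\rho^{-5})$ uniformly in $\eta$. The paper goes one order further and obtains $O(\abs\rho^{-6})$.

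A cheaper repair keeps your $C^{1,1}$ remainder. Its fourth $r$-derivatives are in $L^1$, so the Riemann--Lebesgue lemma gives $o(\abs\rho^{-4})$ uniformly in $\eta$. That is too weak for your subtraction argument. It does suffice for a pointwise bound $\abs{\widehat{\chi u}(\rho,\eta)}\geq c\abs\rho^{-4}$ on $\mathcal O\times\{\abs\rho\geq\rho_*\}$, which you then integrate against the weight, as the paper does.
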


\begin{proof}
  The inequality $\langle z+z'\rangle\leq\sqrt2\langle z\rangle\langle z'\rangle$ implies \[ w_{s,I}^{\alpha,\beta}(\xi) \leq 2^{(s+\sum_{i=1}^Na_i)/2} w_{s,I}^{\alpha,\beta}(\xi-\zeta) w_{s,I}^{\alpha,\beta}(\zeta). \]
  For $\varphi\in C_c^\infty(\R^{3N})$, the Fourier product formula and Tonelli's theorem therefore yield
  \begin{align} \norm[\Bmix{s}{I}{\alpha,\beta}]{\varphi u} &\leq (2\pi)^{-3N/2}2^{(s+\sum_{i=1}^Na_i)/2} \norm[L^1]{w_{s,I}^{\alpha,\beta}\widehat\varphi} \norm[\Bmix{s}{I}{\alpha,\beta}]{u}. \label{eq:localizationbound} \end{align}
  Since $\widehat\varphi\in\Sch(\R^{3N})$, one has $\norm[L^1]{w_{s,I}^{\alpha,\beta}\widehat\varphi}<\infty$.

  Put $m=3N-3$ and let $(\rho,\eta)\in\R^3\times\R^m$ denote the Fourier variables dual to $(r,y)$. We write $A\Subset B$ if $\overline A$ is a compact subset of $B$. Assume first that $u_1(0,\cdot)\not\equiv0$ and choose $y_0\in\R^m$ such that $(0,y_0)\in\mathcal U$ and $u_1(0,y_0)\neq0$. There are $0<\varepsilon'<\varepsilon$ and open sets $y_0\in U_y'\Subset U_y\subset\R^m$ such that $B_\varepsilon(0)\times U_y\Subset\mathcal U$. Choose \[ \begin{aligned} &\chi_0\in C_c^\infty(B_\varepsilon(0)), &&0\leq\chi_0\leq1, &&\chi_0\equiv1\ \text{on }B_{\varepsilon'}(0),\\ &\chi_y\in C_c^\infty(U_y), &&0\leq\chi_y\leq1, &&\chi_y\equiv1\ \text{on }U_y'. \end{aligned} \] Define $\chi(r,y)=\chi_0(r)\chi_y(y)$ and $b_0(y)=\chi_y(y)u_1(0,y)$. Then \[ \chi\in C_c^\infty(\mathcal U), \qquad 0\leq\chi\leq1, \qquad \chi\equiv1\ \text{on }B_{\varepsilon'}(0)\times U_y', \qquad \supp\chi\Subset\mathcal U, \] while $b_0\in C_c^\infty(U_y)$ and $b_0(y_0)=u_1(0,y_0)\neq0$.

  Set $g(r)=\chi_0(r)\abs r$. Since $\chi_0\equiv1$ near the origin, the distributional identity $\Delta_r^2\abs r=-8\pi\delta_0$ gives \[ G\coloneqq\Delta_r^2g+8\pi\delta_0\in C_c^\infty(\R^3). \] Under the unitary Fourier convention, \[ \abs\rho^4\widehat g(\rho)=-8\pi(2\pi)^{-3/2}+\widehat G(\rho). \] Since $\widehat G$ is Schwartz, for every $M>0$, \[ \widehat g(\rho)=-2\sqrt{\frac2\pi}\,\abs\rho^{-4}+O_M(\abs\rho^{-M}) \qquad (\abs\rho\to\infty), \] and the same rapidly decaying remainder estimate holds after any number of $\rho$-derivatives.

  To justify the Taylor step, set \[ b_k(y)=\chi_y(y)\partial_{r_k}u_1(0,y), \qquad C_{k\ell}(r,y)=\chi_y(y)\int_0^1(1-\lambda)\partial_{r_k}\partial_{r_\ell}u_1(\lambda r,y)\,\dd\lambda. \] Since $B_\varepsilon(0)$ is star-shaped with respect to the origin, Taylor's formula with integral remainder gives, on $\supp\chi_0\times\supp\chi_y$, \[ \chi_y(y)u_1(r,y)=b_0(y)+\sum_{k=1}^3r_kb_k(y)+\sum_{k,\ell=1}^3r_kr_\ell C_{k\ell}(r,y). \] Consequently, \[ \chi u=\chi u_0+g(r)b_0(y)+\sum_{k=1}^3r_kg(r)b_k(y)+g(r)\sum_{k,\ell=1}^3r_kr_\ell C_{k\ell}(r,y). \]

  Since $\supp\chi\Subset\mathcal U$, extension by zero gives $\chi u_0\in C_c^\infty(\R^{3+m})$. Hence, for every $M_0\geq0$, there is a constant $C_{M_0}$ such that \[ \abs{\widehat{\chi u_0}(\rho,\eta)}\leq C_{M_0}\langle(\rho,\eta)\rangle^{-M_0}, \qquad (\rho,\eta)\in\R^3\times\R^m. \]
  Moreover, \[ \widehat{r_kg}(\rho)=i\partial_{\rho_k}\widehat g(\rho) =8i\sqrt{\frac2\pi}\,\rho_k\abs\rho^{-6}+O_M(\abs\rho^{-M}) \qquad (\abs\rho\to\infty). \]
  For the last sum in the decomposition above, a further first-order Taylor formula gives \[ C_{k\ell}(r,y)=C_{k\ell}(0,y)+\sum_{j=1}^3r_jE_{k\ell j}(r,y), \qquad E_{k\ell j}(r,y)=\int_0^1\partial_{r_j}C_{k\ell}(\lambda r,y)\,\dd\lambda. \]
  The terms containing $C_{k\ell}(0,y)$ satisfy \[ \widehat{r_kr_\ell g}(\rho)=-\partial_{\rho_k}\partial_{\rho_\ell}\widehat g(\rho) =O(\abs\rho^{-6}) \qquad (\abs\rho\to\infty). \]
  Define the remaining part by \[ h(r,y)\coloneqq g(r)\sum_{k,\ell,j=1}^3r_kr_\ell r_jE_{k\ell j}(r,y). \]
  Since $g(r)=\chi_0(r)\abs r$, this is a finite sum of terms $a_{k\ell j}(r,y)\abs r\,r_kr_\ell r_j$ with $a_{k\ell j}=\chi_0E_{k\ell j}\in C_c^\infty(\R^{3+m})$. Homogeneity gives, for every multi-index $\gamma$ with $\abs\gamma\leq6$, \[ \left|\partial_r^\gamma\bigl(\abs r\,r_kr_\ell r_j\bigr)\right| \leq C_\gamma\abs r^{4-\abs\gamma}, \qquad 0<\abs r\leq1. \]
  At order six the right-hand side is $C_\gamma\abs r^{-2}$, and \[ \int_{\{\abs r<1\}}\abs r^{-2}\,\dd r=4\pi<\infty. \]
  Integration by parts on $\{\abs r>\varepsilon\}$ produces boundary terms of order at most $O(\varepsilon)$, which vanish as $\varepsilon\downarrow0$. Thus these locally integrable classical derivatives represent the weak derivatives. The Leibniz rule gives $\partial_r^\gamma h\in L^1(\R^{3+m})$ for $\abs\gamma\leq6$. For each $\rho\neq0$, choose $k_0\in\{1,2,3\}$ such that $\abs{\rho_{k_0}}\geq\abs\rho/\sqrt3$. Six integrations by parts in $r_{k_0}$ give \[ \abs{\widehat h(\rho,\eta)} \leq (2\pi)^{-(3+m)/2}\abs{\rho_{k_0}}^{-6} \norm[L^1(\R^{3+m})]{\partial_{r_{k_0}}^6h} \leq C\abs\rho^{-6}, \] uniformly in $\eta$. Thus $\widehat h(\rho,\eta)=O(\abs\rho^{-6})$ uniformly in $\eta$.

  Combining these estimates yields the more precise expansion \begin{equation}\label{eq:cuspfourier} \begin{aligned} \widehat{\chi u}(\rho,\eta)={}&-2\sqrt{\frac2\pi}\,\abs\rho^{-4}\widehat {b_0}(\eta)\\ &+8i\sqrt{\frac2\pi}\,\abs\rho^{-6}\sum_{k=1}^3\rho_k\widehat {b_k}(\eta)+O(\abs\rho^{-6}) \qquad (\abs\rho\to\infty), \end{aligned} \end{equation} uniformly in $\eta\in\R^m$. In particular, the second line is $O(\abs\rho^{-5})$.

  Fourier injectivity and $b_0\not\equiv0$ give a point $\eta_0\in\R^m$ with $\widehat b_0(\eta_0)\neq0$. By continuity, there is $\varepsilon_\eta>0$ such that, with \[ \mathcal O=B_{\varepsilon_\eta}(\eta_0), \qquad b_*=\frac12\abs{\widehat b_0(\eta_0)}, \] one has $\abs{\widehat b_0(\eta)}\geq b_*$ for every $\eta\in\mathcal O$. Denote $m$-dimensional Lebesgue measure by $\mathcal L^m$. The uniform remainder in \eqref{eq:cuspfourier} yields a $\rho_{\mathcal O}\geq1$ for which
  \begin{equation} \label{eq:cusplowerbound} \abs{\widehat{\chi u}(\rho,\eta)} \geq\sqrt{\frac2\pi}\,b_*\abs\rho^{-4}, \qquad \eta\in\mathcal O,\quad \abs\rho\geq \rho_{\mathcal O}. \end{equation}

  Consider case (ii). Write \[ R=\frac{x_i+x_j}{2},\qquad r=x_i-x_j, \qquad y=(R,x_{\widehat{i,j}}),\qquad x_{\widehat{i,j}}=(x_k)_{k\notin\{i,j\}}. \]
  The corresponding Fourier variables are \[ \eta=(\eta_R,\eta')=(\xi_i+\xi_j,\xi_{\widehat{i,j}}), \qquad \rho=\frac{\xi_i-\xi_j}{2},\qquad \xi_{\widehat{i,j}}=(\xi_k)_{k\notin\{i,j\}}, \] and the active-variable transformation has
  \[
    \begin{pmatrix}\eta_R\\ \rho\end{pmatrix}
    =\begin{pmatrix}1&1\\[1mm] \frac12&-\frac12\end{pmatrix}
      \begin{pmatrix}\xi_i\\ \xi_j\end{pmatrix},
    \qquad
    \left|\det\begin{pmatrix}1&1\\ \frac12&-\frac12\end{pmatrix}\right|=1.
  \]
  The same matrix acts in each spatial component. Hence \[ \xi_i=\rho+\frac{\eta_R}{2},\qquad \xi_j=-\rho+\frac{\eta_R}{2},\qquad \xi_{\widehat{i,j}}=\eta',\qquad \dd\xi=\dd\rho\,\dd\eta. \]
  These relations define $\xi=\xi(\rho,\eta)$. Thus $\eta$ is the Fourier variable dual to the coordinates tangent to the collision manifold $\{r=0\}$. Set
  \[
  \begin{aligned}
    M_{\mathcal O}&=\sup_{\eta\in\mathcal O}\abs\eta,
    &\rho_0&=\max\{\rho_{\mathcal O},2M_{\mathcal O},1\},\\
    \mathcal C_{\mathcal O}&=\{(\rho,\eta)\in\R^3\times\R^m\mid\eta\in\mathcal O,\ \abs\rho\geq \rho_0\}.
  \end{aligned}
  \]
  For $(\rho,\eta)\in\mathcal C_{\mathcal O}$, \[ \frac34\abs\rho\leq\abs{\xi_i},\abs{\xi_j}\leq\frac54\abs\rho, \qquad \abs{\xi_i}^2+\abs{\xi_j}^2 =2\abs\rho^2+\frac12\abs{\eta_R}^2, \qquad \langle\xi\rangle\geq\abs\rho. \]
  Since all exponents in \eqref{eq:weight} are nonnegative, the full weight satisfies
  \begin{equation} \label{eq:pairweightlower} w_{s,I}^{\alpha,\beta}(\xi(\rho,\eta)) \geq \langle\xi\rangle^s\langle\xi_i\rangle^{a_i}\langle\xi_j\rangle^{a_j} \geq\left(\frac34\right)^{a_i+a_j}\abs\rho^{s+a_i+a_j} \end{equation}
  on $\mathcal C_{\mathcal O}$. Combining \eqref{eq:cusplowerbound}, \eqref{eq:pairweightlower}, and spherical coordinates in $\rho$ gives
  \[
  \begin{aligned}
    \norm[\Bmix{s}{I}{\alpha,\beta}]{\chi u}
    &\geq \int_{\mathcal O}\int_{\abs\rho\geq \rho_0}
      w_{s,I}^{\alpha,\beta}(\xi(\rho,\eta))
      \abs{\widehat{\chi u}(\rho,\eta)}\,\dd\rho\,\dd\eta\\
    &\geq4\pi\mathcal L^m(\mathcal O)\sqrt{\frac2\pi}\,b_*
      \left(\frac34\right)^{a_i+a_j}
      \int_{\rho_0}^\infty \varrho^{s+a_i+a_j-2}\,\dd\varrho.
  \end{aligned}
  \]
  The last integral diverges if and only if $s+a_i+a_j\geq1$, proving (ii).

  In case (i), take $r=x_i-R_\nu$, $y=x_{\widehat i}=(x_k)_{k\neq i}$, $\rho=\xi_i$, and $\eta=\xi_{\widehat i}=(\xi_k)_{k\neq i}$. The translation by $R_\nu$ contributes only the unimodular factor $e^{-iR_\nu\cdot\rho}$, and $\dd\xi=\dd\rho\,\dd\eta$. On $\mathcal O\times\{\abs\rho\geq \rho_{\mathcal O}\}$, one has $w_{s,I}^{\alpha,\beta}(\xi)\geq\langle\xi\rangle^s\langle\xi_i\rangle^{a_i}\geq\abs\rho^{s+a_i}$.
  Hence \[ \norm[\Bmix{s}{I}{\alpha,\beta}]{\chi u} \geq4\pi\mathcal L^m(\mathcal O)\sqrt{\frac2\pi}\,b_* \int_{\rho_{\mathcal O}}^\infty\varrho^{s+a_i-2}\,\dd\varrho, \] which is infinite if and only if $s+a_i\geq1$. This proves (i).

  If $u$ belonged to $\Bmix{s}{I}{\alpha,\beta}$, the localization bound \eqref{eq:localizationbound} would imply $\chi u\in\Bmix{s}{I}{\alpha,\beta}$, contradicting the divergent lower bounds in (i)--(ii). This completes the proof.
\end{proof}

The localized obstruction yields the endpoint tests in Proposition~\ref{prop:atomicsharpness}. The ground state $\Phi_{N,Z}$ has a continuous representative by \cite[Theorem~C.1.1]{Simon1982}; the Harnack inequality \cite[Theorem~C.1.3]{Simon1982} makes this representative strictly positive.

\begin{proof}[Proof of Proposition~\ref{prop:atomicsharpness}]
  The Fourier transform of $\psi_{\rm H}$ is a nonzero multiple of $(1+\abs\xi^2)^{-2}$; see \cite[(1.5)--(1.6)]{Yserentant2026}. Hence
  \[
  \begin{aligned}
    \norm[\Bmix{s}{\{1\}}{\alpha,\beta}]{\psi_{\rm H}}
    &=C\int_{\R^3}\langle\xi\rangle^{s+\alpha-4}\,\dd\xi\\
    &=4\pi C\int_0^\infty r^2(1+r^2)^{(s+\alpha-4)/2}\,\dd r.
  \end{aligned}
  \]
The integrand is $\mathcal{O}(r^2)$ as $r\downarrow0$ and is comparable to $r^{s+\alpha-2}$ as $r\to\infty$. The integral is finite if and only if $s+\alpha<1$, proving (i).

Fix a point at which $x_i=x_j$ and no other electron--electron or electron--nucleus collision occurs. Set $r=x_i-x_j$, $R=(x_i+x_j)/2$, and $y=(R,x_{\widehat{i,j}})$, where $x_{\widehat{i,j}}=(x_k)_{k\notin\{i,j\}}$. Restrict to a neighborhood meeting no other collision set. In this neighborhood, the local structure result \cite[Theorem~1.4 and Remark~1.6]{FournaisEtAl2009} gives
\[ \Phi_{N,Z}(r,y)=u_0(r,y)+\abs r u_1(r,y), \]
with analytic coefficients. In these coordinates,
\[ -\frac12(\Delta_{x_i}+\Delta_{x_j})=-\frac14\Delta_R-\Delta_r. \]
  All potential terms other than the isolated pair potential are locally bounded in these coordinates. On every smaller compact set in the tangential variable $y$, as $r\to0$ one has uniformly
  \[
  \begin{aligned}
    -\Delta_r\bigl(\abs r u_1(r,y)\bigr)
      &=-\frac{2u_1(0,y)}{\abs r}+O(1),\\
    \frac1{\abs r}\Phi_{N,Z}(r,y)
      &=\frac{u_0(0,y)}{\abs r}+O(1),
  \end{aligned}
  \]
  All remaining terms in $(H-E)\Phi_{N,Z}$ are $O(1)$, uniformly on such compact sets. Thus \[ (H-E)\Phi_{N,Z}=\frac{-2u_1(0,y)+u_0(0,y)}{\abs r}+O(1), \] so \[ u_1(0,y)=\frac12u_0(0,y). \]
  The continuous representative of $\Phi_{N,Z}$ is strictly positive, so $u_0(0,y)=\Phi_{N,Z}(0,y)>0$. Lemma~\ref{lem:cuspobstruction} applied to the pair $(1,2)$ proves the necessity of $s+\alpha+\beta<1$ in (ii). For (iii), the pairs $(1,2)$ and $(2,3)$ give $s+\alpha+\beta<1$ and $s+2\beta<1$, respectively. Conversely, antisymmetry with respect to the singleton $I=\{1\}$ is a void condition, and Theorem~\ref{thm:mainI} proves sufficiency in all three cases.
\end{proof}

\begin{proof}[Proof of Corollary~\ref{cor:uniformsharpness}]
  Theorem~\ref{thm:mainI} proves sufficiency. Proposition~\ref{prop:atomicsharpness} proves necessity for the three cases by taking, respectively, hydrogen with $(N,I)=(1,\{1\})$, helium with $(N,I)=(2,\{1\})$, and lithium with $(N,I)=(3,\{1\})$. Hence no boundary face of the family can be enlarged uniformly.
\end{proof}

\section{Conclusion}\label{sec:conclusion}

We established sharp mixed spectral Barron regularity for eigenfunctions of clamped-nuclei Coulomb Hamiltonians. Weighted Fourier $L^1$ estimates for the nuclear and electron--electron Coulomb operators, together with antisymmetric cancellation on same-spin blocks, yield index-set-dependent regularity and simultaneous product-moment bounds over all occupied spin blocks. Interpolation with known mixed-Sobolev estimates gives an intermediate Fourier--Lebesgue scale for $1<p<2$, while the hydrogen, helium, and lithium ground states establish the uniform sharpness of every defining inequality of the corresponding parameter regions.

Lemma~\ref{lem:sharp-isotropic-comparison} shows that, when $\alpha>0$ and $n_\sigma>1$, the mixed product-weighted space $\mathcal X_\sigma^{0,\alpha,0}$ is strictly contained in $\B^\alpha(\R^{3N})$ and thus captures additional regularity that is invisible to the isotropic Barron scale. Another natural question is whether neural-network architectures adapted to this product structure can exploit the additional regularity to achieve sharper approximation rates or lower parameter complexity for a prescribed accuracy than those obtained from isotropic Barron regularity alone.

In the isotropic setting, recent results show that quotients obtained by extracting suitable cut-off Jastrow factors belong to $\B^s$ for every $s<2$, and that this range is sharp \cite{Ehrlacher2026,MingYu2026Sharp}. A natural question is whether combining cusp extraction with the product weights introduced here yields a corresponding sharp mixed spectral Barron theory.
\appendix
\section{Space-comparison lemmas}\label{app:comparison-proofs}

This appendix contains the proofs of the two space-comparison results used in Section~\ref{sec:setting}. We first prove Lemma~\ref{lem:sharp-isotropic-comparison}, and then state and prove the two-spin result used after Theorem~\ref{thm:spinrefined}.

\begin{proof}[Proof of Lemma~\ref{lem:sharp-isotropic-comparison}]
  For positive continuous weights $U$ and $V$,
  \begin{equation}\label{eq:weighted-FL1-criterion} \mathcal F L^1(U)\hookrightarrow\mathcal F L^1(V) \quad\Longleftrightarrow\quad V(\xi)\leq CU(\xi)\quad(\xi\in\R^{3N}) \end{equation}
  for some $C>0$. The pointwise bound implies the embedding. Conversely, assume the embedding estimate, fix $\eta\in\R^{3N}$, and choose a nonnegative $\varphi\in C_c^\infty(\R^{3N})$ with $\int_{\R^{3N}}\varphi(\xi)\,\dd\xi=1$. Set \[ g_\varepsilon(\xi)=\varepsilon^{-3N}\varphi\!\left(\frac{\xi-\eta}{\varepsilon}\right), \qquad f_\varepsilon=\mathcal F^{-1}g_\varepsilon. \]
  Since $f_\varepsilon\in\Sch(\R^{3N})$, the embedding estimate gives \[ \int_{\R^{3N}}V(\xi)g_\varepsilon(\xi)\,\dd\xi \leq C\int_{\R^{3N}}U(\xi)g_\varepsilon(\xi)\,\dd\xi. \]
  Letting $\varepsilon\downarrow0$ and using continuity yields $V(\eta)\leq CU(\eta)$, proving \eqref{eq:weighted-FL1-criterion}.

  Write \[ W_{\sigma,\alpha}(\xi)=\sum_{I\in\mathcal I_\sigma}p_I(\xi)^\alpha, \qquad p_I(\xi)=\prod_{i\in I}\langle\xi_i\rangle. \]
  This is the weight defining the sum norm of $\mathcal X_\sigma^{0,\alpha,0}$. Since $\langle\xi_i\rangle\leq\langle\xi\rangle$, \[ W_{\sigma,\alpha}(\xi) \leq\sum_{I\in\mathcal I_\sigma}\langle\xi\rangle^{\alpha\abs I} \leq\abs{\mathcal I_\sigma}\langle\xi\rangle^{\alpha n_\sigma}. \]
  Hence $\B^t\hookrightarrow\mathcal X_\sigma^{0,\alpha,0}$ whenever $t\geq\alpha n_\sigma$.

  Since $\mathcal I_\sigma$ partitions $\{1,\dots,N\}$,
  \[
  \begin{aligned}
    \langle\xi\rangle^\alpha
    &=\left(1+\sum_{I\in\mathcal I_\sigma}\sum_{i\in I}\abs{\xi_i}^2\right)^{\alpha/2}\\
    &\leq\left(\sum_{I\in\mathcal I_\sigma}p_I(\xi)^2\right)^{\alpha/2}
    \leq\sum_{I\in\mathcal I_\sigma}p_I(\xi)^\alpha
    =W_{\sigma,\alpha}(\xi),
  \end{aligned}
  \]
  where the second inequality uses the subadditivity of $r\mapsto r^{\alpha/2}$. For $\alpha=0$, the conclusion follows from $1\leq\abs{\mathcal I_\sigma}=W_{\sigma,0}$. Thus $\mathcal X_\sigma^{0,\alpha,0}\hookrightarrow\B^t$ whenever $0\leq t\leq\alpha$.

  We prove the necessity of both conditions. Choose $I_*\in\mathcal I_\sigma$ with $\abs{I_*}=n_\sigma$ and a unit vector $e\in\R^3$. For $R>0$, let $\xi_i^{(R)}=Re$ if $i\in I_*$ and $\xi_i^{(R)}=0$ otherwise. Then \[ W_{\sigma,\alpha}(\xi^{(R)})=(1+R^2)^{\alpha n_\sigma/2}+\abs{\mathcal I_\sigma}-1, \qquad \langle\xi^{(R)}\rangle^t=(1+n_\sigma R^2)^{t/2}. \]
  If $\B^t\hookrightarrow\mathcal X_\sigma^{0,\alpha,0}$, criterion \eqref{eq:weighted-FL1-criterion} implies $W_{\sigma,\alpha}\leq C\langle\cdot\rangle^t$. The preceding ray then forces $t\geq\alpha n_\sigma$.

  Next fix $k\in\{1,\dots,N\}$ and let $\xi_k^{(R)}=Re$ and $\xi_i^{(R)}=0$ for $i\neq k$. Along this ray, \[ W_{\sigma,\alpha}(\xi^{(R)})=(1+R^2)^{\alpha/2}+\abs{\mathcal I_\sigma}-1, \qquad \langle\xi^{(R)}\rangle^t=(1+R^2)^{t/2}. \]
  If $\mathcal X_\sigma^{0,\alpha,0}\hookrightarrow\B^t$, the criterion implies $\langle\xi\rangle^t\leq CW_{\sigma,\alpha}(\xi)$ and hence $t\leq\alpha$. This proves both equivalences in \eqref{eq:sharp-isotropic-comparison}.

  If $\alpha>0$ and $n_\sigma>1$, the same two rays exclude the reverse of each endpoint embedding. All the weighted Fourier $L^1$ spaces involved are Banach spaces; equality as sets would therefore make the inverse identity continuous by the open mapping theorem. Both endpoint inclusions are consequently strict. If $\alpha=0$ or $n_\sigma=1$, the two endpoint bounds above give $W_{\sigma,\alpha}\asymp\langle\xi\rangle^\alpha$, and the corresponding spaces coincide with equivalent norms. This completes the proof.
\end{proof}

\begin{lemma}[Minimal mixed Barron space for two spin states]
  \label{lem:minimal-two-spin-space}
  Suppose that $\sigma\in\{1,2\}^N$, allowing either spin block to be empty.\footnote{If one spin block is empty, the exponent $\beta$ does not enter the defining weight, and the same minimality conclusion remains valid.} For $s,\alpha,\beta\geq0$, set $\tau=s+\alpha+\beta$. Then
  \begin{equation}\label{eq:minimal-two-spin-space} \mathcal X_\sigma^{0,\tau,0}\hookrightarrow\mathcal X_\sigma^{s,\alpha,\beta}, \qquad \norm[\mathcal X_\sigma^{s,\alpha,\beta}]{f} \leq\abs{\mathcal I_\sigma}N^{s/2}\norm[\mathcal X_\sigma^{0,\tau,0}]{f}. \end{equation}
  Thus $\mathcal X_\sigma^{0,\tau,0}$ is the least member, under continuous inclusion, of the family with fixed total order $\tau$. If both spin blocks are occupied, then $\mathcal X_\sigma^{0,\tau,0}=\mathcal X_\sigma^{0,0,\tau}$ with equal norms.
\end{lemma}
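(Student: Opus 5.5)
The plan is to reduce the embedding to a pointwise comparison of weights, using the criterion \eqref{eq:weighted-FL1-criterion} from the proof of Lemma~\ref{lem:sharp-isotropic-comparison}. In fact only its easy direction is needed, since a pointwise bound on the weights integrates directly into the norm inequality. With two spin values, $\mathcal I_\sigma$ consists of at most two blocks, $I_1=I_1(\sigma)$ and $I_2=I_2(\sigma)$, which partition $\{1,\dots,N\}$. Write $p_J(\xi)=\prod_{i\in J}\langle\xi_i\rangle$, so that $w_{0,J}^{\tau,0}=p_J^\tau$. For each occupied block $I$ we have
\[
w_{s,I}^{\alpha,\beta}=\langle\xi\rangle^s p_I^\alpha p_{I^c}^\beta .
\]
The goal is therefore to show, for each occupied $I$,
\[
\langle\xi\rangle^s p_I^\alpha p_{I^c}^\beta\leq N^{s/2}\bigl(p_{I}^\tau+p_{I^c}^\tau\bigr),
\]
with the convention $p_\varnothing=1$ when $I^c$ is empty. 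Summing this over the at most $\abs{\mathcal I_\sigma}$ blocks gives the constant $\abs{\mathcal I_\sigma}N^{s/2}$ times the $\mathcal X_\sigma^{0,\tau,0}$ weight.

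The key steps are as follows.

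First, I bound the isotropic factor. Since $\langle\xi\rangle^2\leq\prod_{i}\langle\xi_i\rangle^2=p_Ip_{I^c}$ up to squares, we get $\langle\xi\rangle\leq p_I p_{I^c}$. It may be more convenient to use $\langle\xi\rangle^2\leq N\max_i\langle\xi_i\rangle^2$, which gives $\langle\xi\rangle\leq \sqrt N\max\{p_I,p_{I^c}\}$, because each $\langle\xi_i\rangle\leq p_J$ for the block $J$ containing $i$. This second bound is the source of the factor $N^{s/2}$.

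Second, I bound $p_I^\alpha p_{I^c}^\beta\leq \max\{p_I,p_{I^c}\}^{\alpha+\beta}$, using $p_J\geq1$ and $\alpha,\beta\geq0$.

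Combining the two steps gives
\[
w_{s,I}^{\alpha,\beta}\leq N^{s/2}\max\{p_I,p_{I^c}\}^\tau\leq N^{s/2}(p_I^\tau+p_{I^c}^\tau).
\]
The right-hand side is exactly the $\mathcal X_\sigma^{0,\tau,0}$ weight when both blocks are occupied. When one block is empty, $p_{I^c}=1\leq p_I$ and the bound reduces to $N^{s/2}p_I^\tau$, which again is the weight of $\mathcal X_\sigma^{0,\tau,0}$. Multiplying by $\abs{\widehat f}$, integrating, and summing over $I\in\mathcal I_\sigma$ yields \eqref{eq:minimal-two-spin-space}.

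Minimality then follows. Every member $\mathcal X_\sigma^{s,\alpha,\beta}$ with $s+\alpha+\beta=\tau$ contains $\mathcal X_\sigma^{0,\tau,0}$ continuously, and $\mathcal X_\sigma^{0,\tau,0}$ is itself a member of this family.

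For the final claim, assume both blocks are occupied. The defining weight of $\mathcal X_\sigma^{0,0,\tau}$ is the sum over $I\in\{I_1,I_2\}$ of $p_{I^c}^\tau$. Since $I\mapsto I^c$ swaps $I_1$ and $I_2$, this sum equals $p_{I_2}^\tau+p_{I_1}^\tau$, which is the weight of $\mathcal X_\sigma^{0,\tau,0}$. The two norms are therefore equal.

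The only delicate point is the first step: the isotropic weight has to be dominated by the single larger block product, rather than by the full product $p_Ip_{I^c}$. The full product would give exponent $s$ on both blocks and would not fit the form of the target weight. The bound $\abs{\xi}^2\leq N\max_i\abs{\xi_i}^2$ handles this cleanly.
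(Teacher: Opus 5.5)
Your proposal is correct and essentially reproduces the paper's proof: the paper likewise bounds $\langle\xi\rangle\leq N^{1/2}\max\{p_1(\xi),p_2(\xi)\}$ (or $\langle\xi\rangle\leq N^{1/2}p_1(\xi)$ when only one block is occupied), uses $p_\ell\geq1$ to dominate $p_I^\alpha p_{I^c}^\beta$ by $\max\{p_1,p_2\}^{\alpha+\beta}$, and integrates the pointwise bound to get the same constant $\abs{\mathcal I_\sigma}N^{s/2}$. The equal-norm claim is handled exactly as you do, by noting that $I\mapsto I^c$ swaps the two occupied blocks, so the weights $p_1^\tau+p_2^\tau$ and $p_2^\tau+p_1^\tau$ coincide.
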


\begin{proof}[Proof of Lemma~\ref{lem:minimal-two-spin-space}]
  Suppose first that only one spin block $I_1$ is occupied, and set $p_1(\xi)=\prod_{i\in I_1}\langle\xi_i\rangle$. Then $I_1=\{1,\dots,N\}$ and $\langle\xi\rangle\leq N^{1/2}p_1(\xi)$. Since $s+\alpha\leq\tau$ and $p_1(\xi)\geq1$, \[ \langle\xi\rangle^sp_1(\xi)^\alpha \leq N^{s/2}p_1(\xi)^{s+\alpha} \leq N^{s/2}p_1(\xi)^\tau. \]
  Integration against $\abs{\widehat f}$ proves \eqref{eq:minimal-two-spin-space} in this case.

  Suppose next that both spin blocks $I_1,I_2$ are occupied, and set $p_\ell(\xi)=\prod_{i\in I_\ell}\langle\xi_i\rangle$ for $\ell=1,2$. Since the two blocks partition $\{1,\dots,N\}$, \[ \langle\xi\rangle \leq N^{1/2}\max\{p_1(\xi),p_2(\xi)\}. \]
  Using $s+\alpha+\beta=\tau$ gives \[ \langle\xi\rangle^s \bigl(p_1(\xi)^\alpha p_2(\xi)^\beta+p_2(\xi)^\alpha p_1(\xi)^\beta\bigr) \leq2N^{s/2}\max\{p_1(\xi),p_2(\xi)\}^\tau \leq2N^{s/2}\sum_{\ell=1}^2p_\ell(\xi)^\tau. \]
  Integration proves \eqref{eq:minimal-two-spin-space}. Since the triple $(0,\tau,0)$ belongs to the fixed-total-order family and its space embeds into every other member, it is the least member under continuous inclusion.

  When both blocks are occupied, the weights of $\mathcal X_\sigma^{0,\tau,0}$ and $\mathcal X_\sigma^{0,0,\tau}$ are, respectively, $p_1(\xi)^\tau+p_2(\xi)^\tau$ and $p_2(\xi)^\tau+p_1(\xi)^\tau$, so they coincide. When only one block is occupied, no complementary-coordinate weight occurs. This completes the proof.
\end{proof}

\begingroup
\small
\setlength{\bibsep}{2pt plus 0.5pt}
\setlength{\emergencystretch}{2em}
\sloppy
\hbadness=2000
\bibliographystyle{sn-mathphys-num}
\bibliography{references}
\endgroup

\end{document}